%
\documentclass [12pt]{amsart}
\usepackage{geometry}
\geometry{
a4paper,
total={170mm,257mm},
left=30mm,
right=30mm,
top=30mm,
bottom=25mm,
}
\usepackage{graphicx}
\usepackage{amsthm}
\usepackage{amssymb}

\theoremstyle{plain}
\newtheorem{thm}{Theorem}[section]
\newtheorem{cor}[thm]{Corollary}
\newtheorem{lem}[thm]{Lemma}
\newtheorem{prop}[thm]{Proposition}
\newtheorem{defn}[thm]{Definition}
\newtheorem{exa}[thm]{Example}
\newtheorem{rem}[thm]{Remark}

\begin{document}

\title [Graded $r$-Submodules]{Graded $r$-Submodules }

\author[T. ALraqad]{Tariq Alraqad}

\address
{Tariq Alraqad, Department of Mathematics, University of Ha'il, Saudi Arabia.}
\email{t.alraqad@uoh.edu.sa}

\author[H. Saber]{Hicham Saber}

\address
{Hicham Saber, Department of Mathematics, University of Ha'il, Saudi Arabia.}
\email{hicham.saber7@gmail.com}

\author[R. Abu-Dawwas]{Rashid Abu-Dawwas}

\address
{Rashid Abu-Dawwas, Department of Mathematics, Yarmouk University, Jordan.}
\email{rrashid@yu.edu.jo}

 \subjclass[2010]{13A02, 16W50.}

\date{}

\begin{abstract}

Let $G$ be a group with identity $e$ and $R$ a commutative $G$-graded ring with a nonzero unity $1$. In this article, we introduce the concepts of graded $r$-submodules and graded special $r$-submodules, which are generalizations for the notion of graded r-ideals. For a nonzero $G$-graded $R$-module $M$, a  proper graded $R$-submodule $K$ of $M$ is said to be graded $r$-submodule (resp., graded special $r$-submodule) if whenever $a\in h(R)$ and $x\in h(M)$ such that $ax\in K$ with $Ann_{M}(a)=\{0\}$ (resp., $Ann_{R}(x)=\{0\}$), then $x\in K$ (resp., $a\in (K:_{R}M)$). We study various properties of graded $r$-submodules and graded special $r$-submodules, and we give several illustration examples of these two new classes of graded modules.
\end{abstract}

\keywords{graded prime ideals, graded $r$-ideals, graded prime submodules, graded $r$-submodules, graded special $r$-submodules.
 }
 \maketitle

 \section{Introduction}

 The concept of graded prime ideals plays a key role in the theory of commutative graded rings, and it has been widely studied. For example, see (\cite{Dawwas 2}, \cite{Dawwas 1}, \cite{Dawwas Bataineh2}, \cite{Zoubi Dawwas}). Graded prime ideals have been firstly introduced and studied by M. Refai, M. Hailat and S. Obiedat in \cite{Refai Hailat Obiedat}. A proper graded ideal $P$ of a graded ring $R$ is said to graded prime if whenever $x, y\in h(R)$ such that $xy\in P$, then either $x\in P$ or $y\in P$. Graded $r$-ideals have been introduced and studied in \cite{Dawwas Bataineh}. A proper graded ideal $P$ of a graded ring $R$ is said to be graded $r$-ideal if whenever $x, y\in h(R)$ such that $xy\in P$ and $Ann(x)=\{0\}$, then $y\in P$. In this article, we give two different generalizations of the concept of graded $r$-ideals to graded submodules by graded $r$-submodules and graded special $r$-submodules.

Graded prime submodules have been introduced and studied by S. E. Atani in \cite{Atani}. A proper graded $R$-submodule $N$ of $M$ is said to be graded prime if whenever $a\in h(R)$ and $x\in h(M)$ such that $ax\in N$, then either $x\in N$ or $a\in (N:_{R}M)$. In this case $(N:_{R}M)$ is a graded prime ideal of $R$. The concept of graded prime submodules has been widely studied in \cite{Abu-Dawwas4} and \cite{Abu-Dawwas3}. In \cite{Atani}, Atani defined graded pure submodules as a proper graded $R$-submodule $K$ of $M$ satisfies $aM\bigcap K=aK$ for all $a\in h(R)$. Our article is organized as follows:

In Section Two, we recall some standard results from the theory of graded rings and graded modules that will be used in the sequel.

In Section Three, we introduce and study the concept of graded $r$-submodules. We define a proper graded $R$-submodule $K$ of a graded $R$-module $M$ a graded $r$-submodule if whenever $a\in h(R)$ and $x\in h(M)$ such that $ax\in K$ with $Ann_{M}(a)=\{0\}$, then $x\in K$. Several results have been introduced; for example, we prove that graded $r$-submodules and graded prime submodules are totally different (Example \ref{Example 2.4.1} and Example \ref{Example 2.4.2}). On the other hand, we prove that if $K$ is a graded prime submodule of $M$, then $K$ is a graded $r$-submodule of $M$ if and only if $h(R)\cap(K:_{R}M)\subseteq Z(M)$ (Theorem \ref{Proposition 2.2}). We prove that if $K$ is a graded $r$-submodule of $M$, then $(K:_{R}M)$ is not graded $r$-ideal of $R$ in general (Example \ref{Example 2.6}). Also, We show that every graded pure submodule is a graded $r$-submodule (Theorem \ref{Proposition 2.4.2}), and we prove that the converse is not true in general (Example \ref{Example 2.7}). We prove that every proper graded $R$-submodule of $M$ is a graded $r$-submodule of $M$ if and only if $aK=K$ for every graded $R$-submodule $K$ of $M$ and for all $a\in h(R)-Z(M)$ (Theorem \ref{Theorem 2.5}). Also, we prove that if $K$ and $N$ are graded $r$-submodules of $M$ and $P$ is a graded ideal of $R$ such that $P\bigcap(h(R)-Z(M))\neq\emptyset$ and $PK=PN$, then $K=N$ (Theorem \ref{Theorem 2.2}).

In Section Four, we introduce the concept of graded special $r$-submodules which is another generalization of the concept of graded $r$-ideals. We define a proper graded $R$-submodule $K$ of a graded $R$-module $M$ a graded special $r$-submodule if whenever $a\in h(R)$ and $x\in h(M)$ such that $ax\in K$ with $Ann_{R}(x)=\{0\}$, then $a\in (K:_{R}M)$. Several results have been introduced; for example, we prove that the concepts of graded $r$-submodules and graded special $r$-submodules are different (Example \ref{Example 3.11} and Example \ref{Example 3.11.1}). We prove that if $K$ is a graded special $r$-submodule of $M$, then $h(M)\cap K\subseteq T(M)$ (Theorem \ref{Lemma 3.3}), and we prove that the converse is not true in general (Example \ref{Example 3.12}). We show that the concepts of graded prime $R$-submodules and graded special $r$-submodules are different (Example \ref{Example 3.13} and Example \ref{Example 3.13.1}). On the other hand, we prove that if $K$ is a graded prime $R$-submodule of $M$, then $K$ is a graded special $r$-submodule of $M$ if and only if $h(M)\cap K\subseteq T(M)$ (Theorem \ref{Proposition 3.10}), and we show that if $K$ is a graded maximal special $r$-submodule of a graded $R$-module $M$, then $K$ is a graded prime $R$-submodule of $M$ (Theorem \ref{Proposition 3.14}). Theorem \ref{Proposition 3.12} gives a nice characterization for graded special $r$-submodules. Also, we prove that if $K$ is a proper graded $R$-submodule of a graded $R$-module $M$, then $K$ is a graded special $r$-submodule of $M$ if and only if whenever $N$ is a graded $R$-submodule of $M$ such that $N\bigcap(h(M)-T(M))\neq\emptyset$ and $I$ is a graded ideal of $R$ such that $IN\subseteq K$, then $I\subseteq(K:_{R}M)$ (Theorem \ref{Theorem 3.10}). We prove that if $\{0\}$ is the only graded special $r$-submodule of $M$ and $M$ is homogeneous faithful, then $M$ is homogeneous torsion free (Theorem \ref{Theorem 3.13 (2)}). Also, we show that every proper graded $R$-submodule of $M$ is graded special $r$-submodule if and only if $h(M)\subseteq T(M)$ or $Rx=M$ for all $x\in h(M)-T(M)$ (Theorem \ref{Theorem 3.14}).


\section{Preliminaries}

 Throughout this article, $R$ is assumed to be a commutative ring with a nonzero unity $1$.
 Let $G$ be a group with identity $e$. A ring $R$ is said to be $G$-graded ring if there
 exist additive subgroups $R_{g}$ of $R$ such that $R=\displaystyle\bigoplus_{g\in G}R_{g}$ and
 $R_{g}R_{h}\subseteq R_{gh}$ for all $g,h\in G$. The elements of $R_{g}$ are called homogeneous of
 degree $g$ and $R_{e}$ (the identity component of $R$) is a subring of $R$ with $1\in R_{e}$.
 For $x\in R$, $x$ can be written uniquely as $\displaystyle\sum_{g\in G}x_{g}$ where $x_{g}$ is
 the component of $x$ in $R_{g}$. Also, we write $h(R)=\displaystyle\bigcup_{g\in G}R_{g}$ and $supp(R,G)=\left\{g\in G:R_{g}\neq0\right\}$.

 Let $R$ be a $G$-graded ring and $I$ be an ideal of $R$. Then $I$ is called $G$-graded ideal if $I=\displaystyle\bigoplus_{g\in G}\left(I\bigcap R_{g}\right)$, i.e., if $x\in I$
 and $x=\displaystyle\sum_{g\in G}x_{g}$, then $x_{g}\in I$ for all $g\in G$. An ideal of a graded ring need not be graded; see the following example.

 \begin{exa} \label{1}Consider $R=\textbf{Z}[i]$ and $G=\textbf{Z}_{2}$. Then $R$ is $G$-graded by $R_{0}=\textbf{Z}$ and $R_{1}=i\textbf{Z}$. Now, $P=\langle1+i\rangle$ is an ideal of $R$ with $1+i\in P$. If $P$ is a graded ideal, then $1\in P$, so $1=a(1+i)$ for some $a\in R$, i.e., $1=(x+iy)(1+i)$ for some $x,y\in \textbf{Z}$. Thus $1=x-y$ and $0=x+y$, i.e., $2x=1$ and hence $x=\frac{1}{2}$ a contradiction. So, $P$ is not a graded ideal of $R$.\end{exa}

 Let $M$ be a nonzero left $R$ - module. Then $M$ is a $G$-graded $R$-module if there exist additive subgroups $M_{g}$ of $M$ indexed by the elements $g\in G$ such that
$M=\displaystyle\bigoplus_{g\in G}M_{g}$ and $R_{g}M_{h}\subseteq M_{gh}$ for all $g,h\in G$. The elements of $M_{g}$ are called homogeneous of degree $g$. If $x\in M$, then $x$ can be written uniquely as $\displaystyle\sum_{g\in G}x_{g}$, where $x_{g}$ is the component of $x$ in $M_{g}$. Clearly, $M_{g}$ is $R_{e}$-submodule of $M$ for all $g\in G$. Also, we write
$h(M)=\displaystyle\bigcup_{g\in G}M_{g}$ and $supp(M,G)=\left\{g\in G:M_{g}\neq0\right\}$.

Let $M$ be a $G$-graded $R$-module and $N$ be an $R$-submodule of $M$. Then $N$ is called $G$-graded $R$-submodule if $N=\displaystyle\bigoplus_{g\in
G}\left(N\bigcap M_{g}\right)$, i.e., if $x\in N$ and $x=\displaystyle\sum_{g\in G}x_{g}$, then $x_{g}\in N$ for all $g\in G$. Not all $R$-submodules of a $G$-graded $R$-module are $G$-graded (Example \ref{1} will be helpful). For more details in this terminology, see \cite{Nastasescue}.

\begin{lem}\label{B}(\cite{Farzalipour}, Lemma 2.1) Let $R$ be a $G$-graded ring and $M$ be a $G$-graded $R$-module.

\begin{enumerate}

\item If $I$ and $J$ are graded ideals of $R$, then $I+J$ and $I\bigcap J$ are graded ideals of $R$.

\item If $N$ and $K$ are graded $R$-submodules of $M$, then $N+K$ and $N\bigcap K$ are graded $R$-submodules of $M$.

\item If $N$ is a graded $R$-submodule of $M$, $r\in h(R)$, $x\in h(M)$ and $I$ is a graded ideal of $R$, then $Rx$, $IN$ and $rN$ are graded $R$-submodules of $M$. Moreover, $(N:_{R}M)=\left\{r\in R:rM\subseteq N\right\}$ is a graded ideal of $R$.
\end{enumerate}
\end{lem}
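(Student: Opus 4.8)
The plan is to use throughout the standard characterization that a subobject is graded precisely when it is closed under taking homogeneous components: an ideal $I$ is graded iff $x\in I$ implies $x_{g}\in I$ for every $g\in G$, and likewise a submodule $N$ is graded iff $x\in N$ implies $x_{g}\in N$ for every $g$. The only computational input is the grading compatibility $R_{g}R_{h}\subseteq R_{gh}$ and $R_{g}M_{h}\subseteq M_{gh}$, which tells us exactly which homogeneous component a product of homogeneous elements lands in; since $G$ is a group, the assignments $g\mapsto gh$ are bijections, so distinct $g$ contribute to distinct components, and reading off a homogeneous component amounts to collecting the relevant summand.

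For (1), given $x\in I+J$ I would write $x=a+b$ with $a\in I$ and $b\in J$, expand $a=\sum_{g}a_{g}$ and $b=\sum_{g}b_{g}$ into homogeneous components, and note that gradedness of $I$ and $J$ gives $a_{g}\in I$ and $b_{g}\in J$; since the $g$-component of $x$ is $x_{g}=a_{g}+b_{g}$, it lies in $I+J$. For $I\cap J$, any $x\in I\cap J$ satisfies $x_{g}\in I$ and $x_{g}\in J$ simultaneously by gradedness of each, hence $x_{g}\in I\cap J$. Part (2) is word-for-word the same argument with ideals replaced by submodules and the components $R_{g}$ replaced by $M_{g}$.

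For (3), the cases $Rx$ and $rN$ are handled by a single observation. If $x\in M_{\lambda}$, then for any $r=\sum_{g}r_{g}$ we have $rx=\sum_{g}r_{g}x$ with $r_{g}x\in R_{g}M_{\lambda}\subseteq M_{g\lambda}$, so the homogeneous components of an arbitrary element of $Rx$ are again of the form $r_{g}x\in Rx$; the symmetric computation, with $r\in R_{\mu}$ fixed and $n\in N$ varying, handles $rN$. For $IN$ I would reduce to homogeneous generators: since $I=\bigoplus_{g}(I\cap R_{g})$ and $N=\bigoplus_{h}(N\cap M_{h})$, the product $IN$ is generated as an abelian group by elements $an$ with $a\in I\cap R_{g}$ and $n\in N\cap M_{h}$, each of which is homogeneous of degree $gh$; a submodule generated by homogeneous elements is automatically graded.

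The one step that needs genuine care, and the main obstacle, is showing that $(N:_{R}M)$ is graded. Given $r\in(N:_{R}M)$ with decomposition $r=\sum_{g}r_{g}$, I must verify $r_{g}M\subseteq N$ for each $g$. The key is to test against homogeneous elements only: for $m\in M_{h}$ we have $rm=\sum_{g}r_{g}m$ with $r_{g}m\in R_{g}M_{h}\subseteq M_{gh}$, so these summands are precisely the distinct homogeneous components of $rm$. Since $rm\in N$ and $N$ is graded, each component $r_{g}m$ lies in $N$. Because every element of $M$ is a finite sum of homogeneous elements, linearity upgrades this to $r_{g}M\subseteq N$, i.e. $r_{g}\in(N:_{R}M)$, which is exactly the gradedness of the colon ideal.
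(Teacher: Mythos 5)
Your proof is correct. Note that the paper itself offers no proof of this lemma --- it is quoted from Farzalipour--Ghiasvand (Lemma 2.1 of \cite{Farzalipour}) as a known preliminary --- so there is no in-paper argument to compare against; yours is the standard one. The only point that genuinely needs care, namely that for fixed $h$ the map $g\mapsto gh$ is a bijection of $G$, so that the summands $r_{g}m\in R_{g}M_{h}\subseteq M_{gh}$ are precisely the distinct homogeneous components of $rm$ (and likewise for $r_{g}x$ in the $Rx$ case), is exactly the point you isolate and handle correctly; the reduction of $IN$ to homogeneous generators and the closing remark that a submodule generated by homogeneous elements is graded are likewise sound.
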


Moreover, it has been proved in \cite{Farzalipour2} that if $N$ is a graded $R$-submodule of $M$, then $Ann_{R}(N)=\left\{r\in R:rN=\{0\}\right\}$ is a graded ideal of $R$. Similarly, if $I$ is a graded ideal of $R$, then $Ann_{M}(I)=\left\{m\in M:Im=\{0\}\right\}$ is a graded $R$-submodule of $M$. The set $\left\{a\in h(R):Ann_{M}(a)\neq\{0\}\right\}$ will be denoted by $Z(M)$ and the set $\left\{x\in M:Ann_{R}(x)\neq\{0\}\right\}$ will be denoted by $T(M)$.

A graded $R$-module $M$ is said to be homogeneous torsion free if whenever $r\in h(R)$ and $x\in h(M)$ such that $rx=0$, then either $r=0$ or $x=0$. Also, a graded $R$-module $M$ is said to be homogeneous faithful if $rM\neq\{0\}$ for all $0\neq r\in h(R)$.

Let $M$ be a $G$-graded $R$-module and $N$ be an $G$-graded $R$-submodule of $M$. Then $M/N$ may be made into a graded module by putting $(M/N)_{g}=(M_{g}+N)/N$ for all $g\in G$ (see \cite{Nastasescue}).

The following basic facts will be needed, and it is nice to go through proofs to get wet before diving.

\begin{thm} Let $M$ be a graded $R$-module, $N$ be an $R$-submodules of $M$, and $K$ be a graded $R$-submodule of $M$ such that $K\subseteq N$. Then $N$ is a graded $R$-submodule of $M$ if and only if $N/K$ is a graded $R$-submodule of $M/K$.
 \end{thm}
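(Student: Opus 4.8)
The plan is to prove the equivalence by a direct unpacking of the definition of a graded submodule, which requires checking two things for any submodule $L$: that $L$ is an ordinary $R$-submodule, and that $L$ decomposes as $L = \bigoplus_{g\in G}(L \cap (\text{ambient module})_g)$. Since $N$ is given to be an $R$-submodule and $N/K$ is automatically an $R$-submodule of $M/K$ (quotients of modules by submodules are modules, and the $R$-action is inherited), the only content in either direction is the homogeneous decomposition condition. So the whole proof reduces to translating the statement ``$x = \sum_g x_g \in N \Rightarrow x_g \in N$ for all $g$'' back and forth across the quotient map $\pi\colon M \to M/K$.

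First I would fix notation: recall that the grading on $M/K$ is $(M/K)_g = (M_g + K)/K$, so a homogeneous element of $M/K$ of degree $g$ has the form $m_g + K$ with $m_g \in M_g$. For the forward direction, assume $N$ is a graded submodule and take an arbitrary element $\bar{x} = x + K \in N/K$; decompose $x = \sum_g x_g$ with $x_g \in M_g$. Since $x \in N$ and $N$ is graded, each $x_g \in N$, hence each homogeneous component $x_g + K = \pi(x_g) \in N/K$, and these are exactly the graded components of $\bar x$ in $M/K$. This shows every element of $N/K$ has all its homogeneous components back in $N/K$, which is precisely the condition that $N/K$ is a graded submodule of $M/K$.

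For the converse, assume $N/K$ is a graded submodule of $M/K$ and take $x \in N$ with decomposition $x = \sum_g x_g$, $x_g \in M_g$. Then $\bar x = x + K = \sum_g (x_g + K)$, and since $x_g + K \in (M/K)_g$, this is the homogeneous decomposition of $\bar x$ in $M/K$. Because $N/K$ is graded, each component $x_g + K$ lies in $N/K$, meaning $x_g + K = n_g + K$ for some $n_g \in N$, i.e. $x_g - n_g \in K$. Here is the one point that needs care: to conclude $x_g \in N$ I would use the hypothesis $K \subseteq N$, which gives $x_g = n_g + (x_g - n_g) \in N + K = N$. This is the main (and essentially only) obstacle in the argument, and it is exactly where the assumption $K \subseteq N$ is consumed; without it the converse would fail. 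Having shown $x_g \in N$ for all $g$, the module $N$ satisfies the graded decomposition condition, so $N$ is a graded $R$-submodule of $M$, completing the proof.
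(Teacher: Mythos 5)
Your proof is correct and follows essentially the same route as the paper: both directions are handled by pushing the homogeneous decomposition $x=\sum_{g}x_{g}$ through the quotient map using $(M/K)_{g}=(M_{g}+K)/K$. The only difference is that you make explicit the step the paper leaves implicit in the converse, namely that $x_{g}+K\in N/K$ gives $x_{g}\in N$ only because $K\subseteq N$; this is a worthwhile clarification but not a different argument.
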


 \begin{proof} Suppose that $N$ is a graded $R$-submodule of $M$. Clearly, $N/K$ is an $R$-submodule of $M/K$. Let $x+K\in N/K$. Then $x\in N$ and since $N$ is graded, $x=\displaystyle\sum_{g\in G}x_{g}$ where $x_{g}\in N$ for all $g\in G$ and then $(x+K)_{g}=x_{g}+K\in N/K$ for all $g\in G$. Hence, $N/K$ is a graded $R$-submodule of $M/K$. Conversely, let $x\in N$. Then $x=\displaystyle\sum_{g\in G}x_{g}$ where $x_{g}\in M_{g}$ for all $g\in G$ and then $(x_{g}+K)\in (M_{g}+K)/K=\left(M/K\right)_{g}$ for all $g\in G$ such that \begin{center}$\displaystyle\sum_{g\in G}(x+K)_{g}=\displaystyle\sum_{g\in G}(x_{g}+K)=\left(\displaystyle\sum_{g\in G}x_{g}\right)+K=x+K\in N/K$.\end{center} Since $N/K$ is graded, $x_{g}+K\in N/K$ for all $g\in G$ which implies that $x_{g}\in N$ for all $g\in G$. Hence, $N$ is a graded $R$-submodule of $M$.
 \end{proof}

Let $M$ and $M^{\prime}$ be two $G$-graded $R$-modules. An $R$-homomorphism $f:M\rightarrow M^{\prime}$ is said to be graded $R$-homomorphism if $f(M_{g})\subseteq M^{\prime}_{g}$ for all $g\in G$ (see \cite{Nastasescue}).

\begin{thm} Suppose that $f:M\rightarrow M^{\prime}$ is a graded $R$-homomorphism.
\begin{enumerate}
\item If $f$ is a graded $R$-monomorphism and $K$ is a graded $R$-submodule of $M^{\prime}$, then $f^{-1}(K)$ is a graded $R$-submodule of $M$.
\item If $f$ is a graded $R$-epimorphism and $L$ is a graded $R$-submodule of $M$ with $Ker(f)\subseteq L$, then $f(L)$ is a graded $R$-submodule of $M^{\prime}$.
\end{enumerate}
\end{thm}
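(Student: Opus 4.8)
The plan is to prove both directions of the biconditional directly from the definitions of graded $R$-submodule and graded $R$-homomorphism, mimicking the style of the preceding theorem's proof. In each case the heart of the matter is to take an arbitrary element of the candidate submodule, decompose it into homogeneous components, and verify that each component again lies in the submodule; the injectivity/surjectivity hypotheses and the kernel condition are precisely what is needed to transport this decomposition across $f$.

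For part (1), I would start with the standard fact that the preimage of a submodule under an $R$-homomorphism is an $R$-submodule, so only gradedness needs checking. Take $x\in f^{-1}(K)$ and write $x=\sum_{g\in G}x_{g}$ with $x_{g}\in M_{g}$. Applying $f$ and using $f(M_{g})\subseteq M'_{g}$ gives $f(x)=\sum_{g\in G}f(x_{g})$ as the homogeneous decomposition of $f(x)\in K$. Since $K$ is graded, each $f(x_{g})\in K\cap M'_{g}\subseteq K$, so $x_{g}\in f^{-1}(K)$ for every $g$, which is exactly gradedness of $f^{-1}(K)$. Interestingly, this argument uses only that $f$ is a graded $R$-homomorphism and that homogeneous components are sent to homogeneous components; the injectivity in the hypothesis is not actually needed for gradedness, so I would simply carry it along without relying on it.

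For part (2), again $f(L)$ is an $R$-submodule because $f$ is an $R$-homomorphism, so the work is gradedness. Take $y\in f(L)$, so $y=f(\ell)$ for some $\ell\in L$, and write $\ell=\sum_{g\in G}\ell_{g}$ with $\ell_{g}\in L$ (using that $L$ is graded). Then $y=\sum_{g\in G}f(\ell_{g})$ with $f(\ell_{g})\in M'_{g}$, giving the homogeneous decomposition of $y$, and each $f(\ell_{g})\in f(L)$. The surjectivity and the condition $\mathrm{Ker}(f)\subseteq L$ are what guarantee $f(L)$ behaves well: surjectivity ensures $f(M'_{g})$-type components are captured, while $\mathrm{Ker}(f)\subseteq L$ guarantees that the decomposition is well-defined on $f(L)$, i.e., that if $f(\ell)=f(\ell')$ then the homogeneous pieces agree modulo something lying in $L$.

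The main obstacle I anticipate is in part (2): I must be careful that writing $y_{g}=f(\ell_{g})$ genuinely produces the \emph{unique} homogeneous decomposition of $y$ in $M'$ and that membership $y_{g}\in f(L)$ is unambiguous. This is where the hypothesis $\mathrm{Ker}(f)\subseteq L$ enters crucially: a different preimage $\ell'$ of $y$ differs from $\ell$ by an element of $\mathrm{Ker}(f)\subseteq L$, whose homogeneous components (since $L$ is graded) also lie in $L$, so the value of $y_{g}$ in $f(L)$ does not depend on the choice of preimage. Thus the key step is verifying this well-definedness before concluding gradedness; the rest is routine bookkeeping with homogeneous components.
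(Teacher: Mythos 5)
Your proof is correct, and part (1) is essentially identical to the paper's argument. In part (2) you take a slightly different route: you pick a preimage $\ell\in L$ of $y$ directly from the definition of $f(L)$, decompose $\ell$ into homogeneous components inside $L$, and push them forward; the paper instead takes an arbitrary preimage $x$ of $y$ (via surjectivity) and then uses $\mathrm{Ker}(f)\subseteq L$ to force $x\in L$ before decomposing. Your version is cleaner and, as you half-observe, shows that neither surjectivity nor the kernel condition is actually needed for the gradedness of $f(L)$ --- those hypotheses earn their keep elsewhere (e.g.\ in Lemma 4.16 of the paper, where one must pull $ay\in f(L)$ back to $ax\in L$). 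One remark: your anticipated ``well-definedness'' obstacle is a red herring. The homogeneous components $y_{g}$ of $y$ are intrinsic to $y$ as an element of $M'=\bigoplus_{g}M'_{g}$; they do not depend on any choice of preimage. Since $f(\ell_{g})\in M'_{g}$ and $\sum_{g}f(\ell_{g})=y$, uniqueness of the decomposition forces $y_{g}=f(\ell_{g})$, and exhibiting this single preimage $\ell_{g}\in L$ already proves $y_{g}\in f(L)$; there is nothing further to check. The paragraph about comparing two preimages modulo $\mathrm{Ker}(f)$ is harmless but superfluous.
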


\begin{proof}
\begin{enumerate}
\item Clearly, $f^{-1}(K)$ is an $R$-submodule of $M$. Let $x\in f^{-1}(K)$. Then $x\in M$ with $f(x)\in K$, and then $x=\displaystyle\sum_{g\in G}x_{g}$ where $x_{g}\in M_{g}$ for all $g\in G$. So, for every $g\in G$, $f(x_{g})\in f(M_{g})\subseteq M^{\prime}_{g}$ such that $\displaystyle\sum_{g\in G}f(x_{g})=f\left(\displaystyle\sum_{g\in G}x_{g}\right)=f(x)\in K$. Since $K$ is graded, $f(x_{g})\in K$ for all $g\in G$, i.e., $x_{g}\in f^{-1}(K)$ for all $g\in G$. Hence, $f^{-1}(K)$ is a graded $R$-submodule of $M$.
\item Clearly, $f(L)$ is an $R$-submodule of $M^{\prime}$. Let $y\in f(L)$. Then $y\in M^{\prime}$, and so there exists $x\in M$ such that $y=f(x)$, so $f(x)\in L$, which implies that $x\in L$ since $Ker(f)\subseteq L$, and hence $x_{g}\in L$ for all $g\in G$ since $L$ is graded. Thus, $y_{g}=(f(x))_{g}=f(x_{g})\in f(L)$ for all $g\in G$. Therefore, $f(L)$ is a graded $R$-submodule of $M^{\prime}$.
\end{enumerate}
\end{proof}


\section{Graded $r$-Submodules}

In this section, we introduce and study the concept of graded $r$-submodules.

\begin{defn} Let $M$ be a graded $R$-module. A proper graded $R$-submodule $K$ of $M$ is said to be graded $r$-submodule if whenever $a\in h(R)$ and $x\in h(M)$ such that $ax\in K$ with $Ann_{M}(a)=\{0\}$, then $x\in K$.
\end{defn}

\begin{rem} One can clearly see that proper graded $R$-submodule $K$ of $M$ is graded $r$-submodule is equivalent to the facto that $Z(M/K)\subseteq Z(M)$. Also, the graded $r$-submodules of the graded $R$-module $R$ are exactly the graded $r$-ideals of $R$. Note that $Z(M/\{0\})=Z(M)$, and so $\{0\}$ is a graded $r$-submodule of $M$.
\end{rem}

\begin{exa}\label{example 2.1} Consider $R=\mathbb{Z}$, $G=\mathbb{Z}_{2}$ and $M=\mathbb{Z}_{n}[i]=\left\{a+ib:a, b\in \mathbb{Z}_{n}\right\}$. Then $R$ is $G$-graded by $R_{0}=\mathbb{Z}$ and $R_{1}=\{0\}$. Also, $M$ is $G$-graded by $M_{0}=\mathbb{Z}_{n}$ and $M_{1}=i\mathbb{Z}_{n}$. Consider the graded $R$-submodule $K=\langle m\rangle$ of $M$, where $m\in \mathbb{Z}_{n}$ such that $gcd(m, n)=s>1$. Then $K=\langle s\rangle$. Also, $\mathbb{Z}_{n}[i]/K\cong\mathbb{Z}_{s}[i]$. Since $Z(\mathbb{Z}_{s}[i])\subseteq Z(\mathbb{Z}_{n}[i])$, $K$ is a graded $r$-submodule of $M$.
\end{exa}

The following example  shows that a graded $r$-submodule need not be a graded prime submodule.

\begin{exa}\label{Example 2.4.1} Consider $R=\mathbb{Z}$, $G=\mathbb{Z}_{2}$ and $M=\mathbb{Z}_{18}[i]=\left\{a+ib:a, b\in \mathbb{Z}_{18}\right\}$. Then $R$ is $G$-graded by $R_{0}=\mathbb{Z}$ and $R_{1}=\{0\}$. Also, $M$ is $G$-graded by $M_{0}=\mathbb{Z}_{18}$ and $M_{1}=i\mathbb{Z}_{18}$. Consider the graded $R$-submodule $K=\langle9\rangle$ of $M$. By Example \ref{example 2.1}, $K$ is a graded $r$-submodule of $M$, but $K$ is not a graded prime $R$-submodule of $M$ since $3\in h(R)$ and $3\in h(M)$ such that $3.3\in K$ but $3\notin K$ and $3\notin (K:_{R}M)=9\mathbb{Z}$.
\end{exa}

Also, the next example shows that a graded prime submodule need not be a graded $r$-submodule. So, the concepts of graded $r$-submodules and graded prime submodules are  different.

\begin{exa}\label{Example 2.4.2} Consider $R=\mathbb{Z}$, $G=\mathbb{Z}_{2}$ and $M=\mathbb{Z}[i]=\left\{a+ib:a, b\in \mathbb{Z}\right\}$. Then $R$ is $G$-graded by $R_{0}=\mathbb{Z}$ and $R_{1}=\{0\}$. Also, $M$ is $G$-graded by $M_{0}=\mathbb{Z}$ and $M_{1}=i\mathbb{Z}$. Consider the graded $R$-submodule $K=\langle3\rangle$ of $M$. Clearly, $K$ is a graded prime $R$-submodule of $M$, but $K$ is not a graded $r$-submodule of $M$ since $(K:_{R}M)=3\mathbb{Z}\nsubseteq Z(M)$.
\end{exa}

\begin{lem}\label{Lemma 2.1} Let $M$ be a graded $R$-module and $K$ a graded $r$-submodule of $M$. Then $h(R)\cap (K:_{R}M)\subseteq Z(M)$.
\end{lem}

\begin{proof} Since $K$ is a graded $r$-submodule of $M$, we have  $Z(M/K)\subseteq Z(M)$. Therefore $(K:_{R}M)=Ann_{R}(M/K)\subseteq Z(M/K)\subseteq Z(M)$.
\end{proof}

 The converse of Lemma \ref{Lemma 2.1} need  not to be  true in general. Indeed, we have :

\begin{exa}\label{Example 2.3} Consider $R=\mathbb{Z}$, $G=\mathbb{Z}_{2}$ and $M=\mathbb{Z}\times\mathbb{Z}$. Then $R$ is $G$-graded by $R_{0}=\mathbb{Z}$ and $R_{1}=\{0\}$. Also, $M$ is $G$-graded by $M_{0}=\mathbb{Z}\times\{0\}$ and $M_{1}=\{0\}\times\mathbb{Z}$. Consider the graded $R$-submodule $K=2\mathbb{Z}\times\{0\}$ of $M$. Clearly, $(K:_{R}M)=\{0\}\subseteq Z(M)$. Also, $M/K\cong\mathbb{Z}_{2}\times\mathbb{Z}$. Since $2\in Z(\mathbb{Z}_{2}\times\mathbb{Z})$ and  $2\notin Z(M)$, $Z(\mathbb{Z}_{2}\times\mathbb{Z})\nsubseteq Z(M)$. Thus $K$ is not a graded $r$-submodule of $M$.
\end{exa}

\begin{thm}\label{Proposition 2.2} Let $M$ be a graded $R$-module and $K$ a graded prime $R$-submodule of $M$. Then $K$ is a graded $r$-submodule of $M$ if and only if $h(R)\cap(K:_{R}M)\subseteq Z(M)$.
\end{thm}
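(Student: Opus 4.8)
The statement to prove is Theorem \ref{Proposition 2.2}: for a graded prime $R$-submodule $K$ of $M$, $K$ is a graded $r$-submodule if and only if $h(R)\cap(K:_{R}M)\subseteq Z(M)$. One direction is already free: the forward implication is precisely Lemma \ref{Lemma 2.1}, which holds for \emph{any} graded $r$-submodule and does not use primeness at all. So the only content is the converse, where I must use the prime hypothesis to upgrade the containment $h(R)\cap(K:_{R}M)\subseteq Z(M)$ into the full $r$-submodule property.

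\medskip

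For the converse, assume $K$ is graded prime and $h(R)\cap(K:_{R}M)\subseteq Z(M)$. To check that $K$ is a graded $r$-submodule, take $a\in h(R)$ and $x\in h(M)$ with $ax\in K$ and $Ann_{M}(a)=\{0\}$ (equivalently $a\notin Z(M)$), and I must deduce $x\in K$. The plan is to feed $ax\in K$ into the primeness of $K$: since $a\in h(R)$, $x\in h(M)$, and $ax\in K$, the graded prime condition gives either $x\in K$ or $a\in(K:_{R}M)$. In the first case we are done immediately.

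\medskip

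The whole proof therefore hinges on ruling out the second case, and this is where the hypothesis $h(R)\cap(K:_{R}M)\subseteq Z(M)$ does its work. If $a\in(K:_{R}M)$, then since $a\in h(R)$ we have $a\in h(R)\cap(K:_{R}M)\subseteq Z(M)$, forcing $Ann_{M}(a)\neq\{0\}$. This directly contradicts our standing assumption $Ann_{M}(a)=\{0\}$. Hence the second alternative is impossible, and we conclude $x\in K$, as required.

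\medskip

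I expect essentially no obstacle here: once primeness is invoked to split into the two alternatives, the containment hypothesis is tailored exactly to eliminate the $a\in(K:_{R}M)$ branch via the definition of $Z(M)$. The only point demanding care is the bookkeeping with the homogeneity assumptions—one must note that $a$ is homogeneous so that membership in $(K:_{R}M)$ lands $a$ inside $h(R)\cap(K:_{R}M)$ and the hypothesis applies—but this is immediate from the setup. The forward direction, as noted, is simply a citation of Lemma \ref{Lemma 2.1}.
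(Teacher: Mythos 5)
Your proof is correct and follows essentially the same route as the paper: the forward direction is Lemma \ref{Lemma 2.1}, and the converse uses primeness to split $ax\in K$ into $x\in K$ or $a\in(K:_{R}M)$, the latter being killed by the hypothesis $h(R)\cap(K:_{R}M)\subseteq Z(M)$ against $Ann_{M}(a)=\{0\}$. The only difference is presentational: the paper compresses this case analysis into the one-line identity $Z(M/K)=(K:_{R}M)$ for graded prime $K$ together with the remark that being a graded $r$-submodule means $Z(M/K)\subseteq Z(M)$, whereas you carry out the element-level argument explicitly, which in effect supplies the justification the paper leaves implicit.
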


\begin{proof} Suppose that $h(R)\cap(K:_{R}M)\subseteq Z(M)$. Since $K$ is graded prime, $Z(M/K)=(K:_{R}M)\subseteq Z(M)$ and then $K$ is a graded $r$-submodule of $M$. The converse holds by Lemma \ref{Lemma 2.1}.
\end{proof}

\begin{prop}\label{Proposition 2.3} Let $M$ be a graded $R$-module, $K_{1}$ and $K_{2}$ a graded $r$-submodules of $M$. Then $K_{1}\bigcap K_{2}$ is a graded $r$-submodule of $M$.
\end{prop}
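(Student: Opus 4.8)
The plan is to use the characterization from the Remark that a proper graded $R$-submodule $K$ of $M$ is a graded $r$-submodule if and only if $Z(M/K)\subseteq Z(M)$. Working directly from the definition is also clean here, so I would argue as follows. First I would verify that $K_{1}\cap K_{2}$ is a proper graded $R$-submodule of $M$: it is graded by Lemma \ref{B}(2), and it is proper since $K_{1}\cap K_{2}\subseteq K_{1}\subsetneq M$.

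Next I would take homogeneous elements $a\in h(R)$ and $x\in h(M)$ with $ax\in K_{1}\cap K_{2}$ and $Ann_{M}(a)=\{0\}$, and show $x\in K_{1}\cap K_{2}$. The key observation is that $ax\in K_{1}\cap K_{2}$ means $ax\in K_{1}$ and $ax\in K_{2}$ simultaneously. Since $a\in h(R)$, $x\in h(M)$, $ax\in K_{1}$, and $Ann_{M}(a)=\{0\}$, the fact that $K_{1}$ is a graded $r$-submodule forces $x\in K_{1}$. Applying the same reasoning to $K_{2}$ (the hypothesis $Ann_{M}(a)=\{0\}$ is shared and does not depend on which submodule we use) gives $x\in K_{2}$. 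Hence $x\in K_{1}\cap K_{2}$, which is exactly what the definition requires.

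The argument is essentially immediate once the right decomposition of membership in the intersection is made, so I do not anticipate a genuine obstacle. The only point deserving care is confirming that the single annihilator condition $Ann_{M}(a)=\{0\}$ suffices to trigger both graded $r$-submodule hypotheses; this is clear because that condition is a statement about $a$ and $M$ alone and is entirely independent of $K_{1}$ and $K_{2}$. I would close by noting that this proposition extends by a routine induction to any finite intersection $K_{1}\cap\cdots\cap K_{n}$ of graded $r$-submodules, each step being the two-submodule case just established.
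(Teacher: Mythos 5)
Your argument is correct and is essentially the paper's own proof: membership $ax\in K_{1}\cap K_{2}$ is split into $ax\in K_{1}$ and $ax\in K_{2}$, the single hypothesis $Ann_{M}(a)=\{0\}$ triggers the graded $r$-submodule condition for each $K_{i}$, and $x\in K_{1}\cap K_{2}$ follows. Your extra verification that $K_{1}\cap K_{2}$ is a proper graded submodule (via Lemma \ref{B}) is a small point the paper leaves implicit, but it does not change the route.
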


\begin{proof} Let $a\in h(R)$ and $x\in h(M)$ such that $ax\in K_{1}\bigcap K_{2}$ with $Ann_{M}(a)=\{0\}$.  Since $K_{1}$ and $K_{2}$ are graded $r$-submodules of $M$, and  $ax\in K_{1}$ and $ax\in K_{2}$, we get  $x\in K_{1}\bigcap K_{2}$. Thus, $K_{1}\bigcap K_{2}$ is a graded $r$-submodule of $M$.
\end{proof}

It has been proved in (\cite{Atani}, Proposition 2.7) that if $K$ is a graded prime $R$-submodule of $M$, then $(K:_{R}M)$ is a graded prime ideal of $R$. The next example shows this is not true for graded $r$-submodules. Let us first recall the proof of the following theorem \cite{Dawwas Bataineh}.

\begin{thm}\label{A}(\cite{Dawwas Bataineh}) If $R$ is a $G$-graded domain, then $\{0\}$ is the unique graded $r$-ideal of $R$.
\end{thm}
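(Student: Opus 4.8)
The plan is to verify first that $\{0\}$ genuinely is a graded $r$-ideal, and then to show that over a domain no nonzero proper graded ideal can satisfy the $r$-ideal condition. For the first part, note that $\{0\}$ is a proper graded ideal since $1\neq 0$; and if $x,y\in h(R)$ satisfy $xy\in\{0\}$ with $Ann(x)=\{0\}$, then $y\in Ann(x)=\{0\}$, so $y\in\{0\}$. Thus $\{0\}$ is a graded $r$-ideal, and it remains only to prove it is the sole one.

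For uniqueness I would argue by contradiction, assuming $P$ is a nonzero proper graded $r$-ideal. Since $P$ is graded, $P=\bigoplus_{g\in G}(P\cap R_{g})$, so $P\neq\{0\}$ forces some component $P\cap R_{g}$ to be nonzero; pick a nonzero element $a\in P\cap R_{g}\subseteq h(R)$. The decisive observation is that in a domain every nonzero element is a non-zero-divisor, so $Ann(a)=\{0\}$. Now apply the defining property of a graded $r$-ideal to the homogeneous elements $a$ and $1$: writing $a=a\cdot 1\in P$ with $a,1\in h(R)$ and $Ann(a)=\{0\}$, the $r$-ideal condition yields $1\in P$, whence $P=R$, contradicting properness. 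Hence no nonzero proper graded $r$-ideal exists, and $\{0\}$ is the unique graded $r$-ideal of $R$.

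The argument is short precisely because the annihilator hypothesis in the definition of a graded $r$-ideal becomes vacuous over a domain: every nonzero homogeneous element automatically has trivial annihilator, so the condition degenerates into the demand that the ideal absorb $1$. Consequently I do not anticipate any genuine obstacle; the only points requiring care are the extraction of a nonzero homogeneous element from $P$ (immediate from the grading decomposition) and the identification of the annihilator appearing in the definition with the full annihilator $Ann(a)$ in $R$.
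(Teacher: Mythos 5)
Your proof is correct and follows essentially the same route as the paper: extract a nonzero homogeneous element $a$ of a putative nonzero graded $r$-ideal $P$, observe that $Ann(a)=\{0\}$ because $R$ is a domain, and apply the $r$-ideal condition to $a\cdot 1\in P$ to force $1\in P$, contradicting properness. Your explicit choice of a \emph{nonzero} homogeneous component (and your verification that $\{0\}$ is itself a graded $r$-ideal) is if anything slightly more careful than the paper's version, which writes the same argument with the components $a_g$ of an arbitrary nonzero $a\in P$.
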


\begin{proof} Let $P$ be a nonzero proper graded ideal of $R$. Then there exists $0\neq a=\displaystyle\sum_{g\in G}a_{g}\in P$ with $a_{g}\in P$ for all $g\in G$. Since $1.a_{g}\in P$  and $Ann(a_{g})=\{0\}$, $R$ is a domain, then from the fact that $P$ is a graded $r$-ideal, we get $1\in P$, a contradiction. Hence, $\{0\}$ is the only graded $r$-ideal of $R$.
\end{proof}

\begin{exa}\label{Example 2.6} Consider $R=\mathbb{Z}$, $G=\mathbb{Z}_{2}$ and $M=\mathbb{Z}_{4}[i]=\left\{a+ib:a, b\in \mathbb{Z}_{4}\right\}$. Then $R$ is $G$-graded by $R_{0}=\mathbb{Z}$ and $R_{1}=\{0\}$. Also, $M$ is $G$-graded by $M_{0}=\mathbb{Z}_{4}$ and $M_{1}=i\mathbb{Z}_{4}$. By Example \ref{example 2.1}, $K=\langle2\rangle$ is a graded $r$-submodule of $M$, but $(K:_{R}M)=2\mathbb{Z}$ is not a graded $r$-ideal of $R$, since a domain has no nonzero graded $r$-ideals by Theorem \ref{A}.
\end{exa}

\begin{thm}\label{Proposition 2.4.1} Let $M$ be a graded $R$-module and $K$ a proper graded $R$-submodule of $M$. Then $K$ is a graded $r$-submodule of $M$ if and only if $aM\bigcap K=aK$ for all $a\in h(R)-Z(M)$.
\end{thm}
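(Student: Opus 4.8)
The statement characterizes graded $r$-submodules by the purity-type condition $aM \cap K = aK$ for all homogeneous $a$ outside $Z(M)$. I would prove both implications directly from the definition, since the inclusion $aK \subseteq aM \cap K$ is automatic for every $a$ and every submodule (as $aK \subseteq aM$ and $aK \subseteq K$), so in both directions the real content is the reverse inclusion $aM \cap K \subseteq aK$ under the hypothesis $a \notin Z(M)$, i.e. $Ann_M(a) = \{0\}$.

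For the forward direction, assume $K$ is a graded $r$-submodule and fix $a \in h(R) - Z(M)$. To show $aM \cap K \subseteq aK$, take a homogeneous element $y \in aM \cap K$; then $y = ax$ for some $x \in h(M)$ (here I would use that $aM$ is a graded submodule by Lemma \ref{B}(3), so its homogeneous elements arise as $a$ times homogeneous elements of $M$, after adjusting degrees). Since $ax = y \in K$ and $Ann_M(a) = \{0\}$, the $r$-submodule property gives $x \in K$, whence $y = ax \in aK$. A general element of $aM \cap K$ is a finite sum of such homogeneous pieces, each lying in the graded submodule $aK$, so $aM \cap K \subseteq aK$.

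For the converse, assume $aM \cap K = aK$ for all $a \in h(R) - Z(M)$, and verify the defining condition of a graded $r$-submodule. Suppose $a \in h(R)$, $x \in h(M)$ with $ax \in K$ and $Ann_M(a) = \{0\}$, so $a \notin Z(M)$. Then $ax \in aM \cap K = aK$, so $ax = ak$ for some $k \in K$. Thus $a(x - k) = 0$, and since $Ann_M(a) = \{0\}$ kills no nonzero element (caution: I must confirm this applies to the possibly non-homogeneous element $x-k$, which it does because $Ann_M(a) = \{0\}$ means $a$ annihilates only $0$ in all of $M$, not merely in $h(M)$), we get $x = k \in K$. Hence $K$ is a graded $r$-submodule.

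The main technical point to handle carefully is the homogeneous decomposition in the forward direction: writing an arbitrary element of $aM \cap K$ as a sum of homogeneous terms each of the form $a x_g$ with $x_g \in h(M)$, and ensuring each such term lies in $K$ so that the $r$-submodule hypothesis can be applied componentwise. I expect this bookkeeping with graded components — justified by the fact that $aM$, $K$, and $aK$ are all graded submodules (Lemma \ref{B}) — to be the only delicate step; the algebra of cancelling $a$ via $Ann_M(a) = \{0\}$ is then routine.
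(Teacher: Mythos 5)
Your proposal is correct and follows essentially the same route as the paper's proof: both directions rest on decomposing elements of $aM\bigcap K$ into homogeneous components (using that $aM$, $K$, and hence their intersection are graded by Lemma \ref{B}), writing each homogeneous piece as $a$ times a homogeneous element of $M$, applying the $r$-submodule property, and in the converse cancelling $a$ via $Ann_{M}(a)=\{0\}$. The points you flag as delicate --- the degree adjustment when writing a homogeneous element of $aM$ as $ax$ with $x\in h(M)$, and that $Ann_{M}(a)=\{0\}$ applies to the possibly non-homogeneous difference $x-k$ --- are exactly the points the paper's proof also handles, so no gap remains.
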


\begin{proof} Suppose that $K$ is a graded $r$-submodule of $M$. Let $a\in h(R)-Z(M)$. Clearly, $aK\subseteq aM\bigcap K$. Let $x\in aM\bigcap K$. Since $x\in K$ and $K$ is a graded $R$-submodule of $M$, $x_{g}\in K$ for all $g\in G$. Also, since $x\in aM$ and $aM$ is a graded $R$-submodule of $M$ (by Lemma \ref{B}), $x_{g}\in aM$ for all $g\in G$. So, for every $g\in G$, $x_{g}\in aM\bigcap K$ which implies that $x_{g}=am_{g}\in K$ for some $m_{g}\in M$. In fact, $m_{g}\in h(M)$ since $x_{g}\in h(M)$ and $a\in h(R)$. Now, $K$ is a graded $r$-submodule of $M$ implies that $m_{g}\in K$, then $x_{g}=am_{g}\in aK$ for all $g\in G$ and hence  $x=\displaystyle\sum_{g\in G}x_{g}\in aK$. Thus $aM\bigcap K=aK$. Conversely, let $a\in h(R)$ and $x\in h(M)$ such that $ax\in K$ with $Ann_{M}(a)=\{0\}$. Then $ax\in aM\bigcap K=aK$ which implies that $ax=ak$ for some $k\in K$. Since $Ann_{M}(a)=\{0\}$, $x=k\in K$. Hence, $K$ is a graded $r$-submodule of $M$.
\end{proof}

\begin{cor}\label{Proposition 2.4.2} Let $M$ be a graded $R$-module. Then every graded pure $R$-submodule of $M$ is a graded $r$-submodule of $M$.
\end{cor}

\begin{proof} Apply Theorem \ref{Proposition 2.4.1}.
\end{proof}

The next example shows that the converse of Corollary \ref{Proposition 2.4.2} is not true in general.

\begin{exa}\label{Example 2.7} Consider $R=\mathbb{Z}$, $G=\mathbb{Z}_{2}$ and $M=\mathbb{Z}_{16}[i]=\left\{a+ib:a, b\in \mathbb{Z}_{16}\right\}$. Then $R$ is $G$-graded by $R_{0}=\mathbb{Z}$ and $R_{1}=\{0\}$. Also, $M$ is $G$-graded by $M_{0}=\mathbb{Z}_{16}$ and $M_{1}=i\mathbb{Z}_{16}$. By Example \ref{example 2.1}, $K=\langle2\rangle$ is a graded $r$-submodule of $M$. On the other hand, $2\in h(R)$ such that $2M\bigcap K=K\neq 2K$, which means that $K$ is not a graded pure $R$-submodule of $M$.
\end{exa}

\begin{thm}\label{Theorem 2.5} Let $M$ be a graded $R$-module. Then every proper graded $R$-submodule of $M$ is a graded $r$-submodule of $M$ if and only if $aK=K$ for every graded $R$-submodule $K$ of $M$ and for all $a\in h(R)-Z(M)$.
\end{thm}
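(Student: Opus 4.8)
The plan is to reduce both implications to the characterization of graded $r$-submodules established in Theorem \ref{Proposition 2.4.1}, namely that a proper graded $R$-submodule $K$ is a graded $r$-submodule precisely when $aM\bigcap K=aK$ for every $a\in h(R)-Z(M)$ (recall that $a\in h(R)-Z(M)$ means exactly $Ann_{M}(a)=\{0\}$).

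For the forward implication, I would assume every proper graded $R$-submodule of $M$ is a graded $r$-submodule, and fix a graded $R$-submodule $K$ together with $a\in h(R)-Z(M)$. Since $aK\subseteq K$ always holds, it suffices to prove $K\subseteq aK$. By Lemma \ref{B} the set $aK$ is again a graded $R$-submodule of $M$. The key move is to test the defining property not on $K$ but on $aK$: if $aK$ is proper, then it is a graded $r$-submodule by assumption, and for each homogeneous $x\in K$ we have $ax\in aK$ with $Ann_{M}(a)=\{0\}$, which forces $x\in aK$. As $K$ is graded it is spanned by its homogeneous elements, so $K\subseteq aK$ and hence $aK=K$. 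The remaining case $aK=M$ is immediate: then $M=aK\subseteq K\subseteq M$ gives $K=M=aK$. In particular, taking $K=M$ in this argument also yields $aM=M$.

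For the converse, I would assume $aK=K$ for every graded $R$-submodule $K$ of $M$ and every $a\in h(R)-Z(M)$. Let $K$ be an arbitrary proper graded $R$-submodule and $a\in h(R)-Z(M)$. Applying the hypothesis to the submodule $M$ gives $aM=M$, so $aM\bigcap K=M\bigcap K=K$, while applying it to $K$ gives $aK=K$; therefore $aM\bigcap K=aK$. By Theorem \ref{Proposition 2.4.1}, $K$ is a graded $r$-submodule of $M$, as required.

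I expect the main obstacle to lie in the forward direction, specifically in recognizing that the $r$-submodule condition should be applied to the submodule $aK$ rather than to $K$ itself, exploiting $Ann_{M}(a)=\{0\}$ to cancel the factor $a$; the degenerate possibility $aK=M$ must be isolated separately, since $aK$ need not be proper and hence need not be covered by the blanket $r$-submodule hypothesis.
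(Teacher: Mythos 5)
Your proposal is correct and follows essentially the same route as the paper: both directions reduce to Theorem \ref{Proposition 2.4.1}, and the forward implication hinges on applying the graded $r$-submodule hypothesis to the submodule $aK$ (with the non-proper case handled separately, which you do by splitting on whether $aK=M$ while the paper splits on whether $K=M$ — a cosmetic difference). The converse is likewise the paper's argument, differing only in whether one writes $aM\bigcap K=M\bigcap K=K=aK$ or $aM\bigcap K=aM\bigcap aK=aK$.
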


\begin{proof} Suppose that every proper graded $R$-submodule of $M$ is a graded $r$-submodule of $M$. Let $K$ be a graded $R$-submodule of $M$ and $a\in h(R)-Z(M)$. Assume that $K=M$. If $aM\neq M$, then $aM$ is a graded $r$-submodule of $M$ by assumption. Let $x\in M$. Then $x=\displaystyle\sum_{g\in G}x_{g}$ where $x_{g}\in M_{g}$ for all $g\in G$. Now, for every $g\in G$, $ax_{g}\in aM$, and then $x_{g}\in aM$ for all $g\in G$ since $aM$ is a graded $r$-submodule of $M$, and hence $x\in aM$. So, $aM=M$ which is a contradiction. Therefore, $aM=M$. Let $K$ be a proper graded $R$-submodule of $M$. Then $aK\subseteq K\neq M$, and so $aK$ is a graded $r$-submodule of $M$ by assumption. Let $k\in K$. We have $k_{g}\in K$ for all $g\in G$ since $K$ is graded, and then for every $g\in G$, $ak_{g}\in aK$, which implies that $k_{g}\in aK$ for all $g\in G$ since $aK$ is a graded $r$-submodule of $M$. Thus, $k=\displaystyle\sum_{g\in G}k_{g}\in aK$, and so $aK=K$. Conversely, let $K$ be a proper graded $R$-submodule of $M$ and $a\in h(R)-Z(M)$. Then $aM\bigcap K=aM\bigcap aK=aK$, and then $K$ is a graded $r$-submodule of $M$ by Theorem \ref{Proposition 2.4.1}.
\end{proof}

\begin{lem}\label{Proposition 2.4.3} Let $M$ be a graded $R$-module, $K$ a graded $R$-submodule of $M$ and $a\in h(R)$. Then $(K:_{M}a)=\left\{m\in M:am\in K\right\}$ is a graded $R$-submodule of $M$.
\end{lem}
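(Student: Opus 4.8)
The plan is to verify two things in turn: that $(K:_{M}a)$ is an $R$-submodule of $M$ (routine), and that it is homogeneous in the graded sense, i.e. it contains all homogeneous components of each of its elements. The submodule part is immediate from the module axioms: if $m_{1},m_{2}\in(K:_{M}a)$ then $a(m_{1}+m_{2})=am_{1}+am_{2}\in K$, and for $r\in R$ and $m\in(K:_{M}a)$ we have $a(rm)=r(am)\in K$ because $K$ is an $R$-submodule; hence $m_{1}+m_{2}$ and $rm$ again lie in $(K:_{M}a)$.

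For the gradedness, I would first record that $a\in h(R)$ means $a\in R_{h}$ for some $h\in G$, and this homogeneity of $a$ is exactly what makes the argument work. Take an arbitrary $m\in(K:_{M}a)$ and write its decomposition $m=\sum_{g\in G}m_{g}$ with $m_{g}\in M_{g}$. The key step is to identify the homogeneous components of $am$. Since $am=\sum_{g\in G}am_{g}$ and $am_{g}\in R_{h}M_{g}\subseteq M_{hg}$, the term $am_{g}$ is precisely the component of $am$ in degree $hg$; as $g$ runs over $G$, the degrees $hg$ run over $G$ bijectively, so this is the full homogeneous decomposition of $am$.

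Now I would invoke the hypothesis that $K$ is a graded $R$-submodule: since $am\in K$, each homogeneous component of $am$ lies in $K$, and in particular $am_{g}\in K$ for every $g\in G$. By definition this says $m_{g}\in(K:_{M}a)$ for all $g$, which is exactly the statement that $(K:_{M}a)$ is graded. I do not expect any serious obstacle here; the only point requiring care is the bookkeeping in the step above—one must use that $a$ is homogeneous so that $am_{g}$ is a single homogeneous component rather than a sum of components, since otherwise the gradedness of $K$ could not be applied componentwise.

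\begin{proof}
Clearly $(K:_{M}a)=\{m\in M:am\in K\}$ is an $R$-submodule of $M$. Indeed, if $m_{1},m_{2}\in(K:_{M}a)$ and $r\in R$, then $a(m_{1}+m_{2})=am_{1}+am_{2}\in K$ and $a(rm_{1})=r(am_{1})\in K$ since $K$ is an $R$-submodule of $M$; hence $m_{1}+m_{2},rm_{1}\in(K:_{M}a)$.

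To see that $(K:_{M}a)$ is graded, write $a\in R_{h}$ for some $h\in G$. Let $m\in(K:_{M}a)$ and write $m=\displaystyle\sum_{g\in G}m_{g}$ where $m_{g}\in M_{g}$ for all $g\in G$. Then $am=\displaystyle\sum_{g\in G}am_{g}$, and for each $g\in G$ we have $am_{g}\in R_{h}M_{g}\subseteq M_{hg}$, so $am_{g}$ is the homogeneous component of $am$ in degree $hg$. Since $am\in K$ and $K$ is a graded $R$-submodule of $M$, every homogeneous component of $am$ lies in $K$; in particular $am_{g}\in K$ for all $g\in G$. Thus $m_{g}\in(K:_{M}a)$ for all $g\in G$, and therefore $(K:_{M}a)$ is a graded $R$-submodule of $M$.
\end{proof}
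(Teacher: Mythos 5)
Your proof is correct and follows essentially the same route as the paper's: decompose $m$ into homogeneous components, observe that $am=\sum_{g}am_{g}$ with each $am_{g}$ homogeneous, and apply the gradedness of $K$ to conclude $am_{g}\in K$, hence $m_{g}\in(K:_{M}a)$. Your version is in fact slightly more careful than the paper's, since you explicitly note that $am_{g}\in M_{hg}$ and that $g\mapsto hg$ is a bijection, which is the precise reason the $am_{g}$ are the homogeneous components of $am$.
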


\begin{proof} Clearly, $(K:_{M}a)$ is an $R$-submodule of $M$. Let $m\in (K:_{M}a)$. Then $m\in M$ such that $am\in K$.  $M$ is graded implies that $m=\displaystyle\sum_{g\in G}m_{g}$ where $m_{g}\in M_{g}$ for all $g\in G$. Since $a\in h(R)$ and $m_{g}\in h(M)$ for all $g\in G$, $am_{g}\in h(M)$ for all $g\in G$ with $\displaystyle\sum_{g\in G}am_{g}=a\left(\displaystyle\sum_{g\in G}m_{g}\right)=am\in K$, and from the fact that $K$ is a graded $R$-submodule of $M$, we get  $am_{g}\in K$ for all $g\in G$, which means that $m_{g}\in (K:_{M}a)$ for all $g\in G$. Hence, $(K:_{M}a)$ is a graded $R$-submodule of $M$.
\end{proof}

\begin{thm}\label{Proposition 2.4.4} Let $M$ be a graded $R$-module and $K$ a proper graded $R$-submodule of $M$. Then $K$ is a graded $r$-submodule of $M$ if and only if $(K:_{M}a)=K$ for all $a\in h(R)-Z(M)$.
\end{thm}

\begin{proof} Suppose that $K$ is a graded $r$-submodule of $M$. Let $a\in h(R)-Z(M)$. Clearly, $K\subseteq(K:_{M}a)$. Assume that $x\in (K:_{M}a)$. Since $(K:_{M}a)$ is a graded $R$-submodule of $M$ (by Lemma \ref{Proposition 2.4.3}), $x_{g}\in (K:_{M}a)$ for all $g\in G$, and then $x_{g}\in h(M)$ such that $ax_{g}\in K$ for all $g\in G$. Therefore  $x_{g}\in K$ for all $g\in G$, $K$ is a graded $r$-submodule of $M$. and hence $x=\displaystyle\sum_{g\in G}x_{g}\in K$. Thus $(K:_{M}a)=K$ as desired. Conversely, let $a\in h(R)$ and $x\in h(M)$ such that $ax\in K$ with $Ann_{M}(a)=\{0\}$. Then $x\in (K:_{M}a)=K$, and hence $K$ is a graded $r$-submodule of $M$.
\end{proof}

\begin{lem}\label{Corollary 2.1} Let $M$ be a graded $R$-module and $a\in h(R)-Ann_{R}(M)$. Then $Ann_{M}(a)$ is a graded $r$-submodule of $M$.
\end{lem}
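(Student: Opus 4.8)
The plan is to verify the two defining requirements of a graded $r$-submodule for $K=Ann_{M}(a)$: that $K$ is a proper graded $R$-submodule, and that the homogeneous implication in the definition holds. First I would record that $K$ is graded. This follows from the preliminary remarks: since $a\in h(R)$, the principal ideal $Ra$ is a graded ideal of $R$ (Lemma \ref{B}), and $Ann_{M}(Ra)=Ann_{M}(a)$ because $1\in R$, so $Ann_{M}(a)$ is a graded $R$-submodule of $M$. Alternatively, one can argue directly: if $m=\sum_{g\in G}m_{g}\in Ann_{M}(a)$ and $a\in R_{\lambda}$, then $0=am=\sum_{g\in G}am_{g}$ with each $am_{g}\in M_{\lambda g}$ lying in a distinct homogeneous component, forcing $am_{g}=0$, hence $m_{g}\in Ann_{M}(a)$ for all $g$. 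For properness, the hypothesis $a\notin Ann_{R}(M)$ gives $aM\neq\{0\}$, so there is some $m\in M$ with $am\neq0$, whence $m\notin Ann_{M}(a)$ and $Ann_{M}(a)\neq M$.

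The core of the argument is the $r$-submodule condition, and here the computation is short. Suppose $b\in h(R)$ and $x\in h(M)$ satisfy $bx\in Ann_{M}(a)$ with $Ann_{M}(b)=\{0\}$ (equivalently $b\in h(R)-Z(M)$). Then $a(bx)=0$, and since $R$ is commutative this rewrites as $b(ax)=0$. Because $a\in h(R)$ and $x\in h(M)$, the element $ax$ is homogeneous, so $ax$ lies in $Ann_{M}(b)=\{0\}$; therefore $ax=0$, which is exactly $x\in Ann_{M}(a)$. This establishes that $Ann_{M}(a)$ is a graded $r$-submodule of $M$.

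I expect no serious obstacle in this proof; it is essentially a direct verification. The only point requiring care is that the commutativity of $R$ is what lets one pass from $abx=0$ to $b(ax)=0$ and then apply the non-zero-divisor hypothesis on $b$ to the homogeneous element $ax$. The standing assumption that $R$ is commutative (fixed in Section Two) is thus used in an essential way, and the hypothesis $a\notin Ann_{R}(M)$ is needed solely to guarantee properness.
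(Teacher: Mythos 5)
Your proof is correct and is essentially the paper's argument: the paper also reduces to $abx=0$ and concludes $ax=0$ by noting that $\{0\}$ is a graded $r$-submodule applied to $b$ and the homogeneous element $ax$, which is exactly your direct use of $Ann_{M}(b)=\{0\}$. You simply spell out the gradedness and properness of $Ann_{M}(a)$ that the paper dismisses as clear.
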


\begin{proof} Clearly, $Ann_{M}(a)$ is a proper graded $R$-submodule of $M$. Let $b\in h(R)$ and $x\in h(M)$ such that $bx\in Ann_{M}(a)$ with $Ann_{M}(b)=\{0\}$. Then $abx=0\in \{0\}$, and since $\{0\}$ is a graded $r$-submodule of $M$, $ax=0$, and then $x\in Ann_{M}(a)$. Hence, $Ann_{M}(a)$ is a graded $r$-submodule of $M$.
\end{proof}

\begin{thm}\label{Proposition 2.6} Let $M$ be a graded $R$-module such that $\{0\}$ is the only graded $r$-submodule of $M$. Then $\{0\}$ is a graded prime $R$-submodule of $M$.
\end{thm}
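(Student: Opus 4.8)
The plan is to verify the defining condition of a graded prime submodule directly for $\{0\}$. Concretely, I must show that whenever $a\in h(R)$ and $x\in h(M)$ satisfy $ax\in\{0\}$, then either $x\in\{0\}$ or $a\in(\{0\}:_{R}M)$. First I would record that $\{0\}$ is proper since $M$ is a nonzero graded module, and that $(\{0\}:_{R}M)=\{r\in R:rM\subseteq\{0\}\}=Ann_{R}(M)$ straight from the definition of the residual. So the goal reduces to: if $ax=0$ with $x\neq0$, then $a\in Ann_{R}(M)$.

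I would then argue by contradiction. Take homogeneous $a$ and $x$ with $ax=0$, and suppose both $x\neq0$ and $a\notin Ann_{R}(M)$, i.e.\ $a\in h(R)-Ann_{R}(M)$. The crucial step is to invoke Lemma \ref{Corollary 2.1}, which says that for such $a$ the submodule $Ann_{M}(a)$ is a graded $r$-submodule of $M$ (its properness being exactly what $a\notin Ann_{R}(M)$ guarantees). By the hypothesis of the theorem, $\{0\}$ is the \emph{only} graded $r$-submodule of $M$, so $Ann_{M}(a)=\{0\}$. But $ax=0$ means $x\in Ann_{M}(a)$, forcing $x=0$ and contradicting $x\neq0$. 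Hence the assumption $a\notin Ann_{R}(M)$ is untenable, and we conclude $a\in Ann_{R}(M)=(\{0\}:_{R}M)$, which establishes that $\{0\}$ is graded prime.

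I do not anticipate any serious obstacle: the entire argument is a short contrapositive application of Lemma \ref{Corollary 2.1} combined with the uniqueness hypothesis. The only two points warranting explicit mention are that $Ann_{M}(a)$ is genuinely proper (so that it legitimately counts as a graded $r$-submodule, delivered precisely by $a\notin Ann_{R}(M)$) and the identification of $(\{0\}:_{R}M)$ with $Ann_{R}(M)$, both of which are immediate.
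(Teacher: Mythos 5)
Your proposal is correct and follows essentially the same route as the paper: both arguments assume $a\notin Ann_{R}(M)$, invoke Lemma \ref{Corollary 2.1} to see that $Ann_{M}(a)$ is a graded $r$-submodule, use the uniqueness hypothesis to get $Ann_{M}(a)=\{0\}$, and conclude $x=0$. Your additional remarks on the properness of $Ann_{M}(a)$ and the identification $(\{0\}:_{R}M)=Ann_{R}(M)$ are accurate but were left implicit in the paper.
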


\begin{proof} Let $a\in h(R)$ and $x\in h(M)$ such that $ax=0$. Suppose that $a\notin Ann_{R}(M)$. Then by Lemma \ref{Corollary 2.1}, $Ann_{M}(a)$ is a graded $r$-submodule of $M$, and so $Ann_{M}(a)=\{0\}$ by assumption, hence $x=0$, Thus $\{0\}$ is a graded prime $R$-submodule of $M$.
\end{proof}

\begin{cor}\label{Proposition 2.6.1} Let $M$ be a graded $R$-module such that $\{0\}$ is the only graded $r$-submodule of $M$. Then $Ann_{R}(M)$ is a graded prime ideal of $R$.
\end{cor}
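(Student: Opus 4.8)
The plan is to deduce this corollary directly from the immediately preceding theorem together with Atani's result on prime submodules, so that almost no new argument is needed. The hypothesis here is identical to that of Theorem \ref{Proposition 2.6}, namely that $\{0\}$ is the only graded $r$-submodule of $M$. First I would invoke that theorem verbatim to conclude that $\{0\}$ is a graded prime $R$-submodule of $M$. This is where all the substantive work lives (it is the content of Lemma \ref{Corollary 2.1} and Theorem \ref{Proposition 2.6}), and it reduces the present corollary to translating a statement about a prime submodule into one about an annihilator ideal.

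Next I would recall the fact cited from (\cite{Atani}, Proposition 2.7) in the paragraph preceding Theorem \ref{A}: if $K$ is a graded prime $R$-submodule of $M$, then $(K:_{R}M)$ is a graded prime ideal of $R$. Applying this with $K=\{0\}$ shows that $(\{0\}:_{R}M)$ is a graded prime ideal of $R$. Finally I would simply unwind the colon notation, observing that $(\{0\}:_{R}M)=\left\{r\in R: rM\subseteq\{0\}\right\}=\left\{r\in R: rM=\{0\}\right\}=Ann_{R}(M)$, so that the graded prime ideal produced in the previous step is exactly $Ann_{R}(M)$, as claimed.

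Since every step is either a direct appeal to a previously established result or a routine unwinding of definitions, I do not expect a genuine obstacle. The only point meriting a moment's care is that the hypotheses of Theorem \ref{Proposition 2.6} and of Atani's proposition are indeed satisfied: both require $\{0\}$ to be a \emph{proper} graded $R$-submodule of $M$, and also that $Ann_{R}(M)$ be a proper ideal, all of which hold because $M$ is nonzero by the standing assumption (so $1\notin Ann_{R}(M)$). With these sanity checks in place, the chain Theorem \ref{Proposition 2.6} $\Rightarrow$ (\cite{Atani}, Proposition 2.7) $\Rightarrow$ identification of $(\{0\}:_{R}M)$ with $Ann_{R}(M)$ completes the proof.
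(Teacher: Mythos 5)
Your proof is correct and follows the same route as the paper: the paper's own proof is just ``It follows from Theorem \ref{Proposition 2.6},'' which implicitly uses exactly the chain you spell out, namely that $\{0\}$ is a graded prime submodule, that $(K:_{R}M)$ is a graded prime ideal for any graded prime submodule $K$ (\cite{Atani}, Proposition 2.7), and the identification $(\{0\}:_{R}M)=Ann_{R}(M)$. Your added sanity checks on properness are reasonable but not a departure from the paper's argument.
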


\begin{proof} It follows from Theorem \ref{Proposition 2.6}.
\end{proof}

\begin{lem}\label{Theorem 2.1} Let $M$ be a graded $R$-module and $K$ a proper graded $R$-submodule of $M$. Then $K$ is a graded $r$-submodule of $M$ if and only if whenever $P$ is a graded ideal of $R$ and $N$ a graded $R$-submodule of $M$ such that $PN\subseteq K$ with $P\bigcap(h(R)-Z(M))\neq\emptyset$, then $N\subseteq K$.
\end{lem}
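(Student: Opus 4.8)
The plan is to prove both implications, handling the forward direction by passing to homogeneous components and the converse by a suitable choice of a principal ideal and a principal submodule.

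For the forward direction, I would assume $K$ is a graded $r$-submodule and take $P$ and $N$ as in the hypothesis. Since $P\bigcap(h(R)-Z(M))\neq\emptyset$, fix a homogeneous element $a\in P$ with $a\notin Z(M)$, that is $Ann_{M}(a)=\{0\}$. The key step is to argue componentwise: for an arbitrary $n\in N$, write $n=\displaystyle\sum_{g\in G}n_{g}$ with each $n_{g}\in N\bigcap M_{g}\subseteq h(M)$, using that $N$ is graded. Then $an_{g}\in PN\subseteq K$ for every $g$, and since $a\in h(R)$, $n_{g}\in h(M)$, $an_{g}\in K$ and $Ann_{M}(a)=\{0\}$, the defining property of a graded $r$-submodule yields $n_{g}\in K$ for all $g$. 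Summing gives $n\in K$, hence $N\subseteq K$.

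For the converse, I would assume the stated condition and verify the definition directly. Given $a\in h(R)$ and $x\in h(M)$ with $ax\in K$ and $Ann_{M}(a)=\{0\}$, I set $P=Ra$ and $N=Rx$. By Lemma \ref{B}(3) both $Ra$ (a graded ideal, since $a\in h(R)$) and $Rx$ are graded. Then $PN\subseteq K$, because $PN$ is generated by elements of the form $rs(ax)$ with $r,s\in R$, and $ax\in K$ while $K$ is a submodule. Moreover $a\in P$ and $a\in h(R)-Z(M)$, so $P\bigcap(h(R)-Z(M))\neq\emptyset$. The hypothesis then forces $N\subseteq K$, and since $x\in Rx=N$, we conclude $x\in K$, so $K$ is a graded $r$-submodule.

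Both implications are essentially direct once the correct objects are in place, so I do not expect a serious obstacle. The only points requiring care are, in the forward direction, ensuring that the components $n_{g}$ remain in $N$ (guaranteed by $N$ being graded) so that the $r$-submodule property may be applied componentwise, and, in the converse, confirming that the principal ideal $Ra$ with $a\in h(R)$ is genuinely graded and that $PN\subseteq K$; the former is exactly what Lemma \ref{B}(3) provides.
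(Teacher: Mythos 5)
Your proof is correct and follows essentially the same route as the paper: the forward direction picks a homogeneous $a\in P$ with $Ann_{M}(a)=\{0\}$ and applies the $r$-submodule property to the homogeneous components of an arbitrary element of $N$, and the converse takes $P=\langle a\rangle$ and $N=\langle x\rangle$ exactly as the paper does. No gaps.
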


\begin{proof} Suppose that $K$ is a graded $r$-submodule of $M$. Let $P$ be a graded ideal of $R$ and $N$ a graded $R$-submodule of $M$ such that $PN\subseteq K$ with $P\bigcap(h(R)-Z(M))\neq\emptyset$. Then there exists $a\in h(R)\bigcap P$ such that $Ann_{M}(a)=\{0\}$. Assume that $x\in N$. Since $N$ is graded, $x_{g}\in N$ for all $g\in G$. So, for every $g\in G$, $ax_{g}\in K$, and since $K$ is a graded $r$-submodule of $M$, $x_{g}\in K$ for all $g\in G$, and hence $x=\displaystyle\sum_{g\in G}x_{g}\in K$. Therefore, $N\subseteq K$. Conversely, let $a\in h(R)$ and $x\in h(M)$ such that $ax\in K$ with $Ann_{M}(a)=\{0\}$. Then $P=\langle a\rangle$ is a graded ideal of $R$, $N=\langle x\rangle$ is a graded $R$-submodule of $M$ such that $PN\subseteq K$ and $P\bigcap(h(R)-Z(M))\neq\emptyset$. Then $N\subseteq K$ by assumption, and so $x\in K$. Hence, $K$ is a graded $r$-submodule of $M$.
\end{proof}

\begin{thm}\label{Theorem 2.2} Let $M$ be a graded $R$-module, $K$ and $N$ a graded $r$-submodules of $M$ and $P$ a graded ideal of $R$ such that $P\bigcap(h(R)-Z(M))\neq\emptyset$. If $PK=PN$, then $K=N$.
\end{thm}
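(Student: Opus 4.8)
The plan is to exploit the symmetry of the hypothesis in $K$ and $N$: it is enough to establish $K\subseteq N$, for then interchanging the roles of $K$ and $N$ gives $N\subseteq K$ and hence $K=N$. The whole argument rests on a single homogeneous non-zero-divisor extracted from $P$. Indeed, the condition $P\bigcap(h(R)-Z(M))\neq\emptyset$ furnishes an element $a\in h(R)\bigcap P$ with $Ann_{M}(a)=\{0\}$, and this $a$ is exactly the kind of coefficient against which the defining property of a graded $r$-submodule can be tested.

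Next I would fix such an $a$ and take an arbitrary $x\in K$. Since $K$ is a graded $R$-submodule, $x=\sum_{g\in G}x_{g}$ with each homogeneous component $x_{g}\in K\bigcap M_{g}\subseteq h(M)$. The key computation is that $ax_{g}\in PK$, because $a\in P$ and $x_{g}\in K$; using the hypothesis $PK=PN$ together with the elementary fact that $PN\subseteq N$ (as $P\subseteq R$ and $N$ is an $R$-submodule), this gives $ax_{g}\in N$. Now I apply the definition of graded $r$-submodule to $N$: we have $a\in h(R)$, $x_{g}\in h(M)$, $ax_{g}\in N$ and $Ann_{M}(a)=\{0\}$, whence $x_{g}\in N$. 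Summing over $g$ yields $x=\sum_{g\in G}x_{g}\in N$, and therefore $K\subseteq N$.

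Finally, the symmetric inclusion $N\subseteq K$ follows by repeating the argument verbatim with $K$ and $N$ interchanged, since the hypotheses $PK=PN$ and $P\bigcap(h(R)-Z(M))\neq\emptyset$ are symmetric; hence $K=N$. I expect no serious obstacle here: the only points requiring care are the bookkeeping of the homogeneous decomposition of $x$ (so that the $r$-submodule property, stated only for homogeneous elements, can be applied componentwise) and the two routine containments $ax_{g}\in PK$ and $PN\subseteq N$. Alternatively, one could package the componentwise step by observing that $\langle a\rangle K\subseteq N$, where $\langle a\rangle$ is a graded ideal meeting $h(R)-Z(M)$, and then invoke Lemma \ref{Theorem 2.1} applied to the graded $r$-submodule $N$ to conclude $K\subseteq N$ directly.
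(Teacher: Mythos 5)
Your proof is correct and follows essentially the same route as the paper: the paper simply notes $PK=PN\subseteq N$ and invokes Lemma \ref{Theorem 2.1} (whose proof is exactly your componentwise argument with a homogeneous $a\in P$ satisfying $Ann_{M}(a)=\{0\}$), then concludes by symmetry. Your closing remark about packaging the step via Lemma \ref{Theorem 2.1} is precisely what the paper does.
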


\begin{proof} Since $PK=PN\subseteq N$ and $N$ is a graded $r$-submodule of $M$, $K\subseteq N$ by Lemma \ref{Theorem 2.1}. Similarly, $N\subseteq K$, and then $K=N$.
\end{proof}

\begin{thm}\label{Theorem 2.2.1} Let $M$ be a graded $R$-module, $K$ a graded $R$-submodule of $M$ and $P$ a graded ideal of $R$ such that $P\bigcap(h(R)-Z(M))\neq\emptyset$. If $PK$ is a graded $r$-submodule of $M$, then $PK=K$ and hence $K$ is a graded $r$-submodule of $M$.
\end{thm}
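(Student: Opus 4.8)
The plan is to leverage the characterization of graded $r$-submodules in terms of products of graded ideals and graded submodules provided by Lemma \ref{Theorem 2.1}. Observe first that the inclusion $PK\subseteq K$ is automatic, since $P\subseteq R$ and $K$ is an $R$-submodule, so $PK\subseteq RK=K$. Hence the entire content of the statement is the reverse inclusion $K\subseteq PK$: once $PK=K$ is established, the final assertion that $K$ is a graded $r$-submodule is immediate, because $K=PK$ and $PK$ is a graded $r$-submodule by hypothesis.

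Before invoking the lemma I would record the routine bookkeeping that legitimizes it. By Lemma \ref{B}(3) the set $PK$ is a graded $R$-submodule of $M$, and it is a \emph{proper} graded submodule since by hypothesis it is a graded $r$-submodule (any graded $r$-submodule is proper by definition). Thus Lemma \ref{Theorem 2.1} may be applied to the graded $r$-submodule $PK$.

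The key step is then to apply the forward direction of Lemma \ref{Theorem 2.1} to the graded $r$-submodule $PK$, choosing the graded ideal in that lemma to be $P$ itself and the graded $R$-submodule to be $K$. The two hypotheses are met for free: trivially $PK\subseteq PK$, and $P\cap(h(R)-Z(M))\neq\emptyset$ is precisely the standing assumption on $P$. The lemma therefore yields $K\subseteq PK$, and combining this with the automatic inclusion $PK\subseteq K$ gives $PK=K$; since $PK$ is a graded $r$-submodule, so is $K$. I do not anticipate any genuine obstacle: the proof is essentially a one-line application of Lemma \ref{Theorem 2.1} with a cleverly chosen instance, and the only point requiring care is confirming that $PK$ satisfies the hypotheses (graded and proper) needed to invoke that lemma, which it does.
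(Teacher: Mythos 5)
Your proposal is correct and is essentially identical to the paper's own proof: both apply the forward direction of Lemma \ref{Theorem 2.1} to the graded $r$-submodule $PK$ with the pair $(P,K)$ and the trivial inclusion $PK\subseteq PK$ to get $K\subseteq PK\subseteq K$. The extra bookkeeping you record (that $PK$ is graded and proper) is a harmless elaboration of what the paper leaves implicit.
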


\begin{proof} Since $PK\subseteq PK$ and $PK$ is a graded $r$-submodule of $M$, $K\subseteq PK\subseteq K$ by Lemma \ref{Theorem 2.1}. Hence, $PK=K$.
\end{proof}

\begin{thm}\label{Theorem 2.1.1} Let $M$ be a graded $R$-module and $K$ a proper graded $R$-submodule of $M$. If $(K:_{R}M)\subseteq Z(M)$ and $K$ is not a graded $r$-submodule of $M$, then there exist a graded ideal $P$ of $R$ and a graded $R$-submodule $N$ of $M$ such that $P\bigcap(h(R)-Z(M))\neq\emptyset$, $K\subsetneqq N$, $(K:_{R}M)\subsetneqq P$ and $PN\subseteq K$.
\end{thm}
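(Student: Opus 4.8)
The plan is to read off the pair $(P,N)$ directly from a single failure of the defining condition for a graded $r$-submodule. Since $K$ is not a graded $r$-submodule of $M$, negating the definition produces homogeneous elements $a\in h(R)$ and $x\in h(M)$ with $ax\in K$ and $Ann_{M}(a)=\{0\}$, but with $x\notin K$. The condition $Ann_{M}(a)=\{0\}$ says precisely that $a\in h(R)-Z(M)$, so $a$ is the one homogeneous regular element I will place in $P$, and $x$ is the homogeneous element escaping $K$ that I will use to enlarge $N$.

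Next I would set $N=K+Rx$ and $P=(K:_{R}N)=\{r\in R:rN\subseteq K\}$. Then $N$ is a graded $R$-submodule as a sum of the graded submodules $K$ and $Rx$ (Lemma \ref{B}), and $P$ is a graded ideal by the same component-by-component argument that yields the gradedness of $(K:_{R}M)$ in Lemma \ref{B} (for a fixed degree $h$ the translation $g\mapsto gh$ is a bijection, so the pieces $r_{g}n_{h}$ lie in distinct degrees and are forced into $K$). With these choices, $PN\subseteq K$ is immediate from the definition of $P$, and $K\subsetneqq N$ because $x\in N$ while $x\notin K$. The one computation worth isolating is that $a\in P$: since $aK\subseteq K$ and $a(Rx)=R(ax)\subseteq K$ (because $ax\in K$ and $K$ is a submodule), we get $aN=aK+R(ax)\subseteq K$, so $a\in(K:_{R}N)=P$. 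As $a\in h(R)-Z(M)$, this simultaneously gives $P\cap(h(R)-Z(M))\neq\emptyset$.

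It remains to handle the two inclusions with $(K:_{R}M)$. The inclusion $(K:_{R}M)\subseteq P$ is free from $N\subseteq M$: $rM\subseteq K$ forces $rN\subseteq K$. The crux, and the only place the hypothesis $(K:_{R}M)\subseteq Z(M)$ is used, is strictness: the element $a$ lies in $P$, but were $a\in(K:_{R}M)$ we would get $a\in h(R)\cap(K:_{R}M)\subseteq Z(M)$, contradicting $a\notin Z(M)$; hence $a\in P\setminus(K:_{R}M)$ and $(K:_{R}M)\subsetneqq P$. This settles all four requirements. I expect the main obstacle to be purely bookkeeping rather than conceptual: verifying that $P=(K:_{R}N)$ is genuinely graded. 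If one prefers to avoid that point entirely, one can instead take $P=(K:_{R}M)+Ra$, which is visibly graded by Lemma \ref{B} since $a\in h(R)$; the only nonroutine part is then re-checking $PN\subseteq K$, where the single nontrivial summand is $Ra\cdot Rx=R(ax)\subseteq K$.
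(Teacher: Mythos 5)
Your proof is correct and follows the same overall strategy as the paper: negate the definition to obtain a homogeneous pair $(a,x)$ with $ax\in K$, $Ann_{M}(a)=\{0\}$ and $x\notin K$, then manufacture $P$ and $N$ from colon constructions so that $a$ witnesses both $P\bigcap(h(R)-Z(M))\neq\emptyset$ and the strictness $(K:_{R}M)\subsetneqq P$ (via the hypothesis $(K:_{R}M)\subseteq Z(M)$), while $x$ witnesses $K\subsetneqq N$. The only real difference is the order of construction and the specific objects chosen: the paper first sets $P=(K:_{R}x)$ and then takes $N=(K:_{M}P)$, the largest submodule carried into $K$ by $P$, whereas you first set $N=K+Rx$, the smallest graded enlargement of $K$ containing $x$, and then take $P=(K:_{R}N)$. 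Each variant requires verifying that exactly one colon object is graded (the paper needs $(K:_{M}P)$, you need $(K:_{R}N)$), and your sketch of that verification --- the components $r_{g}n_{h}$ land in the distinct degrees $gh$, so gradedness of $K$ forces each of them into $K$ --- is the right argument; your fallback choice $P=(K:_{R}M)+Ra$ also works and sidesteps it. One small point in your favour: the paper justifies $a\notin(K:_{R}M)$ with the terse ``since $Ann_{M}(a)=\{0\}$,'' leaving the appeal to the hypothesis $(K:_{R}M)\subseteq Z(M)$ implicit, whereas you correctly isolate this as the one place that hypothesis is actually used.
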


\begin{proof} Since $K$ is not a graded $r$-submodule of $M$, there exist $a\in h(R)$ and $x\in h(M)$ such that $ax\in K$ with $Ann_{M}(a)=\{0\}$ and $x\notin K$. Then $P=(K:_{R}x)$ is a graded ideal of $R$ such that $a\in P$ and $a\notin (K:_{R}M)$ since $Ann_{M}(a)=\{0\}$, and hence $(K:_{R}M)\subsetneqq P$. Also, $N=(K:_{M}P)$ is a graded $R$-submodule of $M$ such that $x\notin K$ and $x\in N$, and hence $K\subsetneqq N$ and $PN=P(K:_{M}P)\subseteq K$.
\end{proof}


\section{Graded Special $r$-Submodules}

In this section, we introduce and study the concept of graded special $r$-submodules.

\begin{defn} Let $M$ be a graded $R$-module. A proper graded $R$-submodule $K$ of $M$ is said to be graded special $r$-submodule if whenever $a\in h(R)$ and $x\in h(M)$ such that $ax\in K$ with $Ann_{R}(x)=\{0\}$, then $a\in(K:_{R}M)$.
\end{defn}

\begin{exa}\label{Proposition 3.11} Let $M$ be a graded $R$-module. To prove that $\{0\}$ is a graded special $r$-submodule of $M$, assume that $a\in h(R)$ and $x\in h(M)$ such that $ax=0$ with $Ann_{R}(x)=\{0\}$. Then $a=0\in (\{0\}:_{R}M)$, and hence $\{0\}$ is a graded special $r$-submodule of $M$.
\end{exa}

The next two examples show that the concepts of graded $r$-submodules and graded special $r$-submodules are different.

\begin{exa}\label{Example 3.11} Consider $R=\mathbb{R}$ (field of real numbers), $G=\mathbb{Z}_{2}$ and $M=\mathbb{R}^{2}$. Then $R$ is $G$-graded by $R_{0}=\mathbb{R}$ and $R_{1}=\{0\}$. Also, $M$ is $G$-graded by $M_{0}=\mathbb{R}\times\{0\}$ and $M_{1}=\{0\}\times\mathbb{R}$. Consider the graded $R$-submodule $K=\left\{(m, 0):m\in \mathbb{R}\right\}$ of $M$. Since $Z(M/K)=\{0\}$, $K$ is a graded $r$-submodule of $M$. On the other hand, $2\in h(R)$ and $(1, 0)\in h(M)$ such that $2(1, 0)=(2, 0)\in K$ with $Ann_{R}((1, 0))=\{0\}$, but $2\notin (K:_{R}M)$, which means that $K$ is not a graded special $r$-submodule of $M$.
\end{exa}

\begin{exa}\label{Example 3.11.1} Consider $R=\mathbb{Z}\times\mathbb{Z}$, $G=\mathbb{Z}_{2}$ and $M=\mathbb{Z}\times\mathbb{Z}_{2}$. Then $R$ is $G$-graded by $R_{0}=\mathbb{Z}\times\mathbb{Z}$ and $R_{1}=\{0\}$. Also, $M$ is $G$-graded by $M_{0}=\mathbb{Z}\times\mathbb{Z}_{2}$ and $M_{1}=\{0\}$. Consider the graded $R$-submodule $K=2\mathbb{Z}\times\{0\}$ of $M$. Since $Ann_{R}(x)\neq\{0\}$ for all $x\in h(M)$, $K$ is a graded special $r$-submodule of $M$. On the other hand, $(2, 0)\in h(R)$ and $(1, 0)\in h(M)$ such that $(2, 0)(1, 0)=(2, 0)\in K$ with $Ann_{M}((2, 0))=\{0\}$, but $(1, 0)\notin K$, which means that $K$ is not a graded $r$-submodule of $M$.
\end{exa}

\begin{thm}\label{Lemma 3.3} Let $M$ be a graded $R$-module and $K$ a graded special $r$-submodule of $M$. Then $h(M)\cap K \subseteq T(M)$.
\end{thm}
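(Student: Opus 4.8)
The plan is to prove the statement elementwise by contradiction: I will fix an arbitrary $x\in h(M)\cap K$ and show that it must lie in $T(M)$, i.e.\ that $Ann_{R}(x)\neq\{0\}$. So I would assume toward a contradiction that $x\notin T(M)$, which by definition of $T(M)$ means precisely $Ann_{R}(x)=\{0\}$.

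The key observation driving the argument is that the multiplicative identity $1$ lies in $R_{e}\subseteq h(R)$, so $1$ is a homogeneous element of $R$ that I am entitled to substitute into the defining condition of a graded special $r$-submodule. Taking $a=1$ and the given $x$, I have $1\cdot x=x\in K$ with $1\in h(R)$, $x\in h(M)$, and (by the contradiction hypothesis) $Ann_{R}(x)=\{0\}$. Hence the definition of graded special $r$-submodule applied to the pair $(1,x)$ forces $1\in(K:_{R}M)$. But $1\in(K:_{R}M)$ says exactly that $M=1\cdot M\subseteq K$, so $K=M$, contradicting the standing hypothesis that $K$ is a \emph{proper} graded $R$-submodule. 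Therefore $Ann_{R}(x)\neq\{0\}$, i.e.\ $x\in T(M)$; since $x$ was arbitrary in $h(M)\cap K$, we conclude $h(M)\cap K\subseteq T(M)$.

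There is no real computational obstacle here. The only point requiring care is recognizing that $1$ is homogeneous, living in the identity component $R_{e}$, which is precisely what licenses the substitution $a=1$; after that, the "special $r$" hypothesis collapses to the properness of $K$ unless every homogeneous element of $K$ is torsion.
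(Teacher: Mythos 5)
Your proof is correct and is essentially identical to the paper's: both take a homogeneous $x\in K$ with $Ann_{R}(x)=\{0\}$, apply the graded special $r$-submodule condition to the pair $(1,x)$ using $1\in R_{e}\subseteq h(R)$, and derive $1\in(K:_{R}M)$, contradicting the properness of $K$. No differences worth noting.
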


\begin{proof} Suppose that $h(M)\cap K \nsubseteq T(M)$. Then there exists $k\in h(M)\cap K$ such that $Ann_{R}(k)=\{0\}$. Now, $1\in h(R)$ and
$k\in h(M)$ such that $1.k=k\in K$, and then $1\in (K:_{R}M)$ since $K$ is a graded special $r$-submodule of $M$, and hence $K=M$ which is a contradiction. Therefore, $(h(M)\cap K)\subseteq T(M)$.
\end{proof}

The following example shows that the converse of Theorem \ref{Lemma 3.3} is not true in general.

\begin{exa}\label{Example 3.12} Consider $R=\mathbb{R}\times\mathbb{Z}$, $G=\mathbb{Z}_{2}$ and $M=\mathbb{C}\times\mathbb{Z}$. Then $R$ is $G$-graded by $R_{0}=\mathbb{R}\times\mathbb{Z}$ and $R_{1}=\{0\}$. Also, $M$ is $G$-graded by $M_{0}=\mathbb{C}\times\mathbb{Z}$ and $M_{1}=\{0\}$. Consider the graded $R$-submodule $K=\mathbb{R}\times\{0\}$ of $M$. Clearly, $(K:_{R}M)=\{0\}$ and $T(M)=\left(\{0\}\times\mathbb{Z}\right)\bigcup\left(\mathbb{C}\times\{0\}\right)$. So, $h(M)\cap K\subseteq T(M)$. On the other hand, $(1, 0)\in h(R)$ and $(1, 1)\in h(M)$ such that $(1, 0)(1, 1)=(1, 0)\in K$ with $Ann_{R}((1, 1))=\{0\}$, but $(1, 0)\notin (K:_{R}M)$, which means that $K$ is not a graded special $r$-submodule of $M$.
\end{exa}

\begin{thm}\label{Theorem 3.13} Let $M$ be a graded $R$-module. If $T(M)=\{0\}$, then $\{0\}$ is the only graded special $r$-submodule of $M$.
\end{thm}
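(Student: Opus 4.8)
The plan is to prove that any graded special $r$-submodule $K$ of $M$ must be trivial; combined with Example \ref{Proposition 3.11} (which shows that $\{0\}$ is always a graded special $r$-submodule), this yields the desired uniqueness. So I would fix an arbitrary graded special $r$-submodule $K$ of $M$ and aim to show $K=\{0\}$.

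First I would invoke Theorem \ref{Lemma 3.3}, which states that $h(M)\cap K\subseteq T(M)$ for every graded special $r$-submodule $K$. Under the hypothesis $T(M)=\{0\}$, this immediately gives $h(M)\cap K\subseteq\{0\}$; that is, $K$ contains no nonzero homogeneous element of $M$. Next I would exploit the fact that $K$ is a \emph{graded} submodule, so that $K=\bigoplus_{g\in G}(K\cap M_{g})$. For each $g\in G$, every element of $K\cap M_{g}$ is homogeneous of degree $g$ and hence lies in $h(M)\cap K$; by the previous step this forces $K\cap M_{g}=\{0\}$ for all $g\in G$. The direct sum decomposition then collapses to $K=\{0\}$, which is what we wanted.

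I do not expect any genuine obstacle here: the statement follows almost formally from Theorem \ref{Lemma 3.3} together with the defining property of a graded submodule as the direct sum of its homogeneous components. The only step requiring a little care is the passage from ``$K$ has no nonzero homogeneous element'' to ``$K=\{0\}$,'' and this is exactly where the gradedness of $K$ (as opposed to its being merely an $R$-submodule) is indispensable, since a non-graded submodule could in principle contain nonzero non-homogeneous elements even when all its homogeneous pieces vanish.
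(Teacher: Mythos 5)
Your proposal is correct and follows exactly the paper's argument: apply Theorem \ref{Lemma 3.3} to get $h(M)\cap K\subseteq T(M)=\{0\}$, conclude $K=\{0\}$, and cite Example \ref{Proposition 3.11} for the fact that $\{0\}$ is indeed a graded special $r$-submodule. Your explicit justification of the step from ``no nonzero homogeneous elements'' to $K=\{0\}$ via the decomposition $K=\bigoplus_{g\in G}(K\cap M_{g})$ is a detail the paper leaves implicit, but it is the same proof.
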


\begin{proof} Let $K$ be a graded special $r$-submodule of $M$. Then $h(M)\cap K\subseteq T(M)=\{0\}$ by Theorem \ref{Lemma 3.3}, which implies that $K=\{0\}$ which is a graded special $r$-submodule of $M$ by Example \ref{Proposition 3.11}.
\end{proof}

The next two examples show that the concepts of graded prime $R$-submodules and graded special $r$-submodules are different.

\begin{exa}\label{Example 3.13} Consider $R=\mathbb{Z}$, $G=\mathbb{Z}_{3}$ and $M=\mathbb{Z}$. Then $R$ is $G$-graded by $R_{0}=\mathbb{Z}$ and $R_{1}=R_{2}=\{0\}$. Also, $M$ is $G$-graded by $M_{0}=\mathbb{Z}$ and $M_{1}=M_{2}=\{0\}$. Consider the graded $R$-submodule $K=\langle2\rangle$ of $M$. Clearly, $K$ is a graded prime $R$-submodule of $M$ which is not a graded special $r$-submodule of $M$.
\end{exa}

\begin{exa}\label{Example 3.13.1} Consider $R=\mathbb{Z}$, $G=\mathbb{Z}_{3}$ and $M=\mathbb{Z}_{12}$. Then $R$ is $G$-graded by $R_{0}=\mathbb{Z}$ and $R_{1}=R_{2}=\{0\}$. Also, $M$ is $G$-graded by $M_{0}=\mathbb{Z}_{12}$ and $M_{1}=M_{2}=\{0\}$. Consider the graded $R$-submodule $K=\langle4\rangle$ of $M$. Clearly, $K$ is a graded special $r$-submodule of $M$ which is not a graded prime $R$-submodule of $M$.
\end{exa}

\begin{thm}\label{Proposition 3.10} Let $M$ be a graded $R$-module and $K$ a graded prime $R$-submodule of $M$. Then $K$ is a graded special $r$-submodule of $M$ if and only if $h(M)\cap K\subseteq T(M)$.
\end{thm}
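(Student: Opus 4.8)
The plan is to prove the two implications separately, exploiting the fact that one direction is already essentially available from the preceding results. The forward direction does not even require the graded prime hypothesis: if $K$ is a graded special $r$-submodule, then Theorem \ref{Lemma 3.3} directly yields $h(M)\cap K\subseteq T(M)$. So I would dispense with that implication in a single sentence by invoking Theorem \ref{Lemma 3.3}.

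The real content is the converse. Suppose $K$ is graded prime and $h(M)\cap K\subseteq T(M)$; the goal is to verify the defining condition of a graded special $r$-submodule. To this end I would take arbitrary $a\in h(R)$ and $x\in h(M)$ with $ax\in K$ and $Ann_{R}(x)=\{0\}$, and aim to conclude $a\in(K:_{R}M)$. The key move is to feed the containment $ax\in K$ into the graded prime property of $K$: since $a\in h(R)$, $x\in h(M)$, and $ax\in K$, the primeness forces the dichotomy $x\in K$ or $a\in(K:_{R}M)$.

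The crucial step is then to eliminate the branch $x\in K$ using the hypothesis $h(M)\cap K\subseteq T(M)$. If $x\in K$, then $x\in h(M)\cap K\subseteq T(M)$, so by definition of $T(M)$ we would have $Ann_{R}(x)\neq\{0\}$, contradicting the assumption $Ann_{R}(x)=\{0\}$. Hence this branch is impossible, and we are left with $a\in(K:_{R}M)$, which is exactly what the definition of a graded special $r$-submodule demands.

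I expect no genuine obstacle here, as the argument is a short logical combination of the prime dichotomy with the torsion condition; the only thing to be careful about is that homogeneity of $a$ and $x$ is needed to legitimately apply the graded prime property, which is guaranteed since we start with $a\in h(R)$ and $x\in h(M)$. I would conclude by noting that, since $a$ and $x$ were arbitrary homogeneous elements satisfying the hypotheses, $K$ is a graded special $r$-submodule of $M$, completing the proof.
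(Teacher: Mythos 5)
Your proposal is correct and follows essentially the same route as the paper: the forward implication is dispatched by Theorem \ref{Lemma 3.3}, and the converse uses $Ann_{R}(x)=\{0\}$ together with $h(M)\cap K\subseteq T(M)$ to rule out $x\in K$, leaving $a\in(K:_{R}M)$ by primeness. The only cosmetic difference is that the paper argues directly ($x\notin T(M)$ hence $x\notin K$) rather than by eliminating a branch of the prime dichotomy by contradiction.
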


\begin{proof} Suppose that $h(M)\cap K\subseteq T(M)$. Let $a\in h(R)$ and $x\in h(M)$ such that $ax\in K$ with $Ann_{R}(x)=\{0\}$. Then $x\notin T(M)$ which implies that $x\notin K$, and then $a\in (K:_{R}M)$ since $K$ is a graded prime $R$-submodule of $M$. Hence, $K$ is a graded special $r$-submodule of $M$. The converse holds from Theorem \ref{Lemma 3.3}.
\end{proof}

\begin{prop}\label{Proposition 3.11.1} Let $M$ be a graded $R$-module, $K_{1}$ and $K_{2}$ a graded special $r$-submodules of $M$. Then $K_{1}\bigcap K_{2}$ is a graded special $r$-submodule of $M$.
\end{prop}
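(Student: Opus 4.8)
The plan is to imitate the proof of Proposition \ref{Proposition 2.3}, handling the two hypotheses separately and then recombining them through the colon ideal. First I would record that $K_{1}\bigcap K_{2}$ is a proper graded $R$-submodule of $M$: it is graded by Lemma \ref{B}(2), and it is proper since $K_{1}\bigcap K_{2}\subseteq K_{1}\subsetneqq M$, because $K_{1}$ is proper by the definition of a graded special $r$-submodule.

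Next, to verify the defining property, I would take arbitrary $a\in h(R)$ and $x\in h(M)$ with $ax\in K_{1}\bigcap K_{2}$ and $Ann_{R}(x)=\{0\}$. Then $ax\in K_{1}$ and $ax\in K_{2}$ simultaneously. Applying the hypothesis that $K_{1}$ is a graded special $r$-submodule (to the very same homogeneous element $x$, which satisfies $Ann_{R}(x)=\{0\}$) yields $a\in (K_{1}:_{R}M)$; applying the identical hypothesis to $K_{2}$ yields $a\in (K_{2}:_{R}M)$.

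Finally I would conclude via the elementary identity $(K_{1}:_{R}M)\bigcap (K_{2}:_{R}M)=(K_{1}\bigcap K_{2}:_{R}M)$: indeed $aM\subseteq K_{1}$ and $aM\subseteq K_{2}$ together give $aM\subseteq K_{1}\bigcap K_{2}$, so that $a\in (K_{1}\bigcap K_{2}:_{R}M)$. This is exactly the required conclusion, whence $K_{1}\bigcap K_{2}$ is a graded special $r$-submodule of $M$.

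I do not expect a genuine obstacle here; the argument is the special-$r$ analogue of Proposition \ref{Proposition 2.3} and is entirely formal. The only step meriting an explicit line of justification is the colon-ideal identity used at the end, and that follows immediately from unwinding the definition $(N:_{R}M)=\left\{r\in R:rM\subseteq N\right\}$ recorded in Lemma \ref{B}(3); every other step is a direct invocation of the two hypotheses on $K_{1}$ and $K_{2}$.
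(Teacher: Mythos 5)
Your proposal is correct and follows essentially the same route as the paper: split $ax\in K_{1}\bigcap K_{2}$ into the two memberships, apply the defining property of each $K_{i}$ to get $a\in(K_{1}:_{R}M)\bigcap(K_{2}:_{R}M)$, and conclude via the identity $(K_{1}:_{R}M)\bigcap(K_{2}:_{R}M)=(K_{1}\bigcap K_{2}:_{R}M)$. Your extra remarks verifying that $K_{1}\bigcap K_{2}$ is a proper graded submodule and justifying the colon-ideal identity are fine additions that the paper leaves implicit.
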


\begin{proof} Let $a\in h(R)$ and $x\in h(M)$ such that $ax\in K_{1}\bigcap K_{2}$ with $Ann_{R}(x)=\{0\}$. Then $ax\in K_{1}$ and $ax\in K_{2}$, and so $a\in(K_{1}:_{R}M)\bigcap(K_{2}:_{R}M)=(K_{1}\bigcap K_{2}:_{R}M)$ since $K_{1}$ and $K_{2}$ are graded special $r$-submodules of $M$. Hence, $K_{1}\bigcap K_{2}$ is a graded special $r$-submodule of $M$.
\end{proof}

It has been proved in Example \ref{Example 2.6} that if $K$ is a graded $r$-submodule of $M$, then $(K:_{R}M)$ need not be a graded $r$-ideal of $R$. The next example shows that $(K:_{R}M)$ need not be a graded $r$-ideal of $R$ even if $K$ is a graded special $r$-submodule of $M$.

\begin{exa}\label{Example 3.14} Consider $R=\mathbb{Z}$, $G=\mathbb{Z}$ and $M=\mathbb{Z}_{6}[x]$. Then $R$ is $G$-graded by $R_{0}=\mathbb{Z}$ and $R_{j}=\{0\}$ otherwise. Also, $M$ is $G$-graded by $M_{j}=\mathbb{Z}_{6}x^{j}$ for $j\geq0$ and $M_{j}=\{0\}$ otherwise. Consider the graded $R$-submodule $K=\left\{p(x)\in M:p(0)\in \langle2\rangle\right\}$. Then $K$ is a graded special $r$-submodule of $M$, but $(K:_{R}M)=2\mathbb{Z}$ is not a graded $r$-ideal of $R$ by Theorem \ref{A}.
\end{exa}

\begin{thm}\label{Proposition 3.12} Let $K$ be a proper graded $R$-submodule of a graded $R$-module $M$. Then the following are equivalent.
\begin{enumerate}
\item $K$ is a graded special $r$-submodule of $M$.
\item $Rx\bigcap K=(K:_{R}M)x$ for all $x\in h(M)-T(M)$.
\item $(K:_{R}M)=(K:_{R}x)$ for all $x\in h(M)-T(M)$.
\end{enumerate}
\end{thm}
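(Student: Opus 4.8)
The plan is to prove the three statements equivalent by establishing the cycle $(1)\Rightarrow(2)\Rightarrow(3)\Rightarrow(1)$. Throughout, fix $x\in h(M)-T(M)$, which by definition means $x$ is homogeneous with $Ann_R(x)=\{0\}$; this is exactly the hypothesis that triggers the graded special $r$-submodule condition. Recall also that $Rx$ is a graded $R$-submodule of $M$ by Lemma \ref{B}, so the intersection $Rx\cap K$ appearing in (2) is itself a graded $R$-submodule, and $(K:_R x)=\{r\in R:rx\in K\}$ is a graded ideal of $R$ (by the same reasoning used for colon ideals in Lemma \ref{B}). I would begin by noting that the inclusion $(K:_R M)\subseteq (K:_R x)$ holds automatically for \emph{any} $x\in M$, since $rM\subseteq K$ forces $rx\in K$; likewise $(K:_R M)x\subseteq K$ always, and $(K:_R M)x\subseteq Rx$ trivially, so one direction of each identity in (2) and (3) is free.

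\smallskip

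\noindent\textbf{$(1)\Rightarrow(2)$.} Assume $K$ is a graded special $r$-submodule. The inclusion $(K:_R M)x\subseteq Rx\cap K$ is automatic by the previous remark. For the reverse inclusion, take $y\in Rx\cap K$; then $y=ax$ for some $a\in R$, and $y\in K$. The subtlety is that $a$ need not be homogeneous, so I cannot apply the defining condition directly to $ax\in K$. The remedy is to decompose: since $Rx\cap K$ is graded, each homogeneous component $y_g$ lies in $Rx\cap K$, and because $x$ is homogeneous of some fixed degree, $y_g=a_g x$ with $a_g\in h(R)$ the appropriate homogeneous component. Now $a_g x=y_g\in K$ with $a_g\in h(R)$, $x\in h(M)-T(M)$, so the graded special $r$-condition yields $a_g\in (K:_R M)$, whence $y_g=a_g x\in (K:_R M)x$ and, summing, $y\in (K:_R M)x$. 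I expect this componentwise argument to be the main obstacle, precisely because the element $a$ witnessing membership in $Rx$ is a priori inhomogeneous and one must exploit gradedness to recover homogeneous witnesses.

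\smallskip

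\noindent\textbf{$(2)\Rightarrow(3)$ and $(3)\Rightarrow(1)$.} For $(2)\Rightarrow(3)$, fix $x\in h(M)-T(M)$; I must show $(K:_R x)\subseteq (K:_R M)$, the reverse being automatic. Take $r\in (K:_R x)$, so $rx\in K$, and clearly $rx\in Rx$, giving $rx\in Rx\cap K=(K:_R M)x$ by (2). Thus $rx=sx$ for some $s\in (K:_R M)$, so $(r-s)x=0$; since $Ann_R(x)=\{0\}$ we get $r=s\in (K:_R M)$. (Here one may again need to argue componentwise to stay within homogeneous elements, but the annihilator condition $Ann_R(x)=\{0\}$ forces equality outright.) Finally, for $(3)\Rightarrow(1)$, suppose $a\in h(R)$ and $x\in h(M)$ with $ax\in K$ and $Ann_R(x)=\{0\}$; then $x\in h(M)-T(M)$, so $a\in (K:_R x)=(K:_R M)$ by (3), which is exactly the defining condition for $K$ to be a graded special $r$-submodule. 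This closes the cycle and completes the proof.
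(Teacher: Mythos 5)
Your proposal is correct and follows essentially the same route as the paper: the cycle $(1)\Rightarrow(2)\Rightarrow(3)\Rightarrow(1)$, with the key step in $(1)\Rightarrow(2)$ being the homogeneous decomposition of the witness $a$ (the paper decomposes $a=\sum_g a_g$ and uses gradedness of $K$ to get $a_gx\in K$, which is the same componentwise reduction you perform via gradedness of $Rx\cap K$). The remaining implications match the paper's argument verbatim.
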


\begin{proof} $\underline{(1)\Rightarrow(2)}$: Let $x\in h(M)-T(M)$. Then clearly, $(K:_{R}M)x\subseteq Rx\bigcap K$. Assume that $y\in Rx\bigcap K$. Then $y=rx\in K$ fore some $r\in R$. Now, $r=\displaystyle\sum_{g\in G}r_{g}$ where $r_{g}\in R_{g}$ for all $g\in G$. Since $x\in h(M)$, $r_{g}x\in h(M)$ for all $g\in G$ such that $\displaystyle\sum_{g\in G}r_{g}x=\left(\displaystyle\sum_{g\in G}r_{g}\right)x=rx\in K$, and since $K$ is graded, $r_{g}x\in K$ for all $g\in G$.  $K$ is a graded special $r$-submodule of $M$ implies that $r_{g}\in (K:_{R}M)$ for all $g\in G$, and then $r=\displaystyle\sum_{g\in G}r_{g}\in (K:_{R}M)$. Thus $y=rx\in (K:_{R}M)x$, as desired.

$\underline{(2)\Rightarrow(3)}$: Let $x\in h(M)-T(M)$. Then clearly, $(K:_{R}M)\subseteq(K:_{R}x)$. Assume that $r\in (K:_{R}x)$. Then  $rx\in Rx\bigcap K=(K:_{R}M)x$ by assumption. So, $rx=sx$ for some $s\in (K:_{R}M)$, which implies that $(r-s)x=0$. Since $Ann_{R}(x)=\{0\}$, $r=s\in (K:_{R}M)$, as desired.

$\underline{(3)\Rightarrow(1)}$: Let $r\in h(R)$ and $x\in h(M)$ such that $rx\in K$ with $Ann_{R}(x)=\{0\}$. Then $x\in h(M)-T(M)$ with $r\in (K:_{R}x)=(K:_{R}M)$ by assumption. Hence, $K$ is a graded special $r$-submodule of $M$.
\end{proof}

\begin{lem}\label{Theorem 3.9} Suppose that $f:M\rightarrow M^{\prime}$ is a graded $R$-homomorphism.
\begin{enumerate}
\item If $f$ is a graded $R$-monomorphism and $K$ is a graded special $r$-submodule of $M^{\prime}$, then $f^{-1}(K)$ is a graded special $r$-submodule of $M$.
\item If $f$ is a graded $R$-epimorphism and $L$ is a graded special $r$-submodule of $M$ with $Ker(f)\subseteq L$, then $f(L)$ is a graded special $r$-submodule of $M^{\prime}$.
\end{enumerate}
\end{lem}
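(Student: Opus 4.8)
The plan is to verify, in each part, the defining condition of a graded special $r$-submodule, while delegating the underlying graded-submodule structure to the corresponding statement for graded $R$-submodules already established in Section 2 (the second Theorem there, on $f^{-1}(K)$ and $f(L)$). Concretely, in both parts I would take homogeneous data $a\in h(R)$ and a homogeneous element with trivial annihilator satisfying the membership hypothesis, transport everything across $f$, apply the hypothesis on $K$ (resp. $L$), and then transport the resulting conclusion $aM'\subseteq K$ (resp. $aM\subseteq L$) back. The two recurring technical devices are: moving the condition $Ann_{R}(\cdot)=\{0\}$ across $f$, and moving the colon-ideal conclusion across $f$.

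For (1), I would first note that $f^{-1}(K)$ is a graded $R$-submodule of $M$ by the Section 2 result, and that it is proper provided $f(M)\nsubseteq K$ (a point worth recording). To check the $r$-condition, take $a\in h(R)$ and $x\in h(M)$ with $ax\in f^{-1}(K)$ and $Ann_{R}(x)=\{0\}$. Since $f$ is graded, $f(x)\in h(M')$, and $R$-linearity gives $af(x)=f(ax)\in K$. The key step is to transport the annihilator: injectivity of $f$ forces $rf(x)=f(rx)=0\Rightarrow rx=0$, so $Ann_{R}(f(x))=\{0\}$. Applying the hypothesis that $K$ is a graded special $r$-submodule of $M'$ yields $a\in(K:_{R}M')$, i.e. $aM'\subseteq K$. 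Pushing back, $f(aM)=af(M)\subseteq aM'\subseteq K$, hence $aM\subseteq f^{-1}(K)$, that is $a\in(f^{-1}(K):_{R}M)$.

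For (2), $f(L)$ is a graded $R$-submodule of $M'$ by the Section 2 result, and it is proper because $Ker(f)\subseteq L$ gives $f^{-1}(f(L))=L\neq M$. Now take $a\in h(R)$ and $y\in h(M')$ with $ay\in f(L)$ and $Ann_{R}(y)=\{0\}$. First I would produce a homogeneous preimage: choosing any preimage of $y$, decomposing it into homogeneous components, and comparing degrees (using uniqueness of the decomposition together with $f(M_g)\subseteq M'_g$) extracts an $x\in h(M)$ with $f(x)=y$. Then $f(ax)=ay\in f(L)$, and since $Ker(f)\subseteq L$ we get $ax\in f^{-1}(f(L))=L$. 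The annihilator transports again: $rx=0\Rightarrow ry=f(rx)=0$, so $Ann_{R}(x)=\{0\}$. Because $L$ is a graded special $r$-submodule, $a\in(L:_{R}M)$, i.e. $aM\subseteq L$; pushing forward through the surjection, $aM'=af(M)=f(aM)\subseteq f(L)$, so $a\in(f(L):_{R}M')$.

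I expect the main obstacle to be the homogeneity bookkeeping rather than the algebra. In (2) one must genuinely produce a \emph{homogeneous} preimage $x$ of the homogeneous element $y$, not just any preimage, and this is exactly where the graded structure of $f$ and the uniqueness of homogeneous decompositions are used. In both parts the crux is that the condition $Ann_{R}(\cdot)=\{0\}$ survives passage across $f$—guaranteed by injectivity in (1) and by the identity $ry=f(rx)$ combined with $Ann_{R}(y)=\{0\}$ in (2)—and the properness of $f^{-1}(K)$ and $f(L)$ should be recorded explicitly, the former under the mild hypothesis $f(M)\nsubseteq K$ and the latter automatically from $Ker(f)\subseteq L$.
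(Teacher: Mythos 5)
Your proposal is correct and follows essentially the same route as the paper's proof: transport the data across $f$, carry the condition $Ann_{R}(\cdot)=\{0\}$ along (via injectivity in part (1), and via $ry=f(rx)$ together with $Ann_{R}(y)=\{0\}$ in part (2)), apply the hypothesis on $K$ (resp.\ $L$), and pull the colon-ideal conclusion back through $f$. You are in fact slightly more careful than the paper, which in part (2) takes an arbitrary preimage of $y$ without verifying that a \emph{homogeneous} preimage exists, and which never records the properness caveat (your observation that one needs $f(M)\nsubseteq K$ in part (1) is a genuine, if minor, omission in the paper's statement).
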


\begin{proof}
\begin{enumerate}
\item Let $a\in h(R)$ and $x\in h(M)$ such that $ax\in f^{-1}(K)$ with $Ann_{R}(x)=\{0\}$. Then $f(x)\in h(M^{\prime})$ such that $af(x)=f(ax)\in K$ with $Ann_{R}(f(x))=\{0\}$, and so $a\in (K:_{R}M^{\prime})\subseteq(f^{-1}(K):_{R}M)$ since $K$ is a graded special $r$-submodule of $M^{\prime}$. Hence, $f^{-1}(K)$ is a graded special $r$-submodule of $M$.
\item Let $a\in h(R)$ and $y\in h(M^{\prime})$ such that $ay\in f(L)$ with $Ann_{R}(y)=\{0\}$. Then there exists $x\in M$ such that $f(x)=y$, and since $Ann_{R}(y)=\{0\}$, $Ann_{R}(x)=\{0\}$. Now, $f(ax)=af(x)=ay\in f(L)$ and $Ker(f)\subseteq L$, hence $ax\in L$. Since $L$ is a graded special $r$-submodule of $M$, $a\in (L:_{R}M)\subseteq(f(L):_{R}M^{\prime})$. Thus, $f(L)$ is a graded special $r$-submodule of $M^{\prime}$.
\end{enumerate}
\end{proof}

\begin{thm}\label{Corollary 3.3} Let $L$ be a graded $R$-submodule of $M$. Then
\begin{enumerate}
\item for every graded special $r$-submodule $K$ of $M$ with $L\nsubseteq K$, $K\bigcap L$ is a graded special $r$-submodule of $L$.
\item for every graded special $r$-submodule $K$ of $M$ with $L\subseteq K$, $K/L$ is a graded special $r$-submodule of $M/L$.
\end{enumerate}
\end{thm}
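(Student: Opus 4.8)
The plan is to verify the defining condition of a graded special $r$-submodule directly in each of the two cases, after first checking that the proposed submodule is proper and graded.

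For part (1), I first note that $K\bigcap L$ is a graded $R$-submodule of $L$ by Lemma \ref{B}(2), and it is proper in $L$ because $K\bigcap L=L$ would force $L\subseteq K$, contrary to hypothesis. Since the grading on $L$ is given by $L_{g}=L\bigcap M_{g}$, one has $h(L)\subseteq h(M)$. So I would take $a\in h(R)$ and $x\in h(L)$ with $ax\in K\bigcap L$ and $Ann_{R}(x)=\{0\}$; regarding $x$ as a homogeneous element of $M$ and using $ax\in K$, the fact that $K$ is a graded special $r$-submodule of $M$ gives $a\in(K:_{R}M)$, i.e. $aM\subseteq K$. It then remains only to pass from $aM\subseteq K$ to $a\in(K\bigcap L:_{R}L)$: since $aL\subseteq aM\subseteq K$ and also $aL\subseteq L$, we obtain $aL\subseteq K\bigcap L$, as required.

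For part (2), the correspondence theorem recalled at the start of Section 2 shows that $K/L$ is a graded $R$-submodule of $M/L$, and it is proper because $K\neq M$. I would then take $a\in h(R)$ and a homogeneous element $\overline{x}\in h(M/L)$ with $a\overline{x}\in K/L$ and $Ann_{R}(\overline{x})=\{0\}$. The key point is that, because $(M/L)_{g}=(M_{g}+L)/L$, a homogeneous representative $x\in h(M)$ with $\overline{x}=x+L$ can be chosen. Then $a\overline{x}\in K/L$ together with $L\subseteq K$ yields $ax\in K$, and $Ann_{R}(x)=\{0\}$ follows since $rx=0$ implies $r\overline{x}=\overline{0}$, hence $r\in Ann_{R}(\overline{x})=\{0\}$. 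Applying the hypothesis on $K$ gives $a\in(K:_{R}M)$, i.e. $aM\subseteq K$, so that $a(M/L)\subseteq K/L$, which is exactly $a\in(K/L:_{R}M/L)$.

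I expect the only genuinely delicate step to be the lifting of a homogeneous element of $M/L$ to a homogeneous element of $M$ in part (2): the special $r$-condition is tested only on homogeneous elements, so one must be certain the representative may be taken inside $h(M)$, and this is precisely what the description $(M/L)_{g}=(M_{g}+L)/L$ of the quotient grading guarantees. Everything else is a routine translation between the annihilator and colon conditions across $M$, $L$, and $M/L$.
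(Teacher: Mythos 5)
Your argument is correct, but it takes a different route from the paper. The paper obtains both parts in two lines as immediate consequences of Lemma \ref{Theorem 3.9}: part (1) by applying Lemma \ref{Theorem 3.9}(1) to the inclusion $I:L\rightarrow M$ (a graded $R$-monomorphism with $I^{-1}(K)=K\bigcap L$), and part (2) by applying Lemma \ref{Theorem 3.9}(2) to the natural projection $f:M\rightarrow M/L$ (a graded $R$-epimorphism with $Ker(f)=L\subseteq K$ and $f(K)=K/L$). You instead verify the defining condition directly, which amounts to unwinding those two applications by hand. What the paper's route buys is brevity and reuse of an already-established general lemma; what your route buys is that it makes explicit two points the paper leaves implicit: the properness of $K\bigcap L$ in $L$ and of $K/L$ in $M/L$ (Lemma \ref{Theorem 3.9} never addresses properness of $f^{-1}(K)$ or $f(L)$, so the hypotheses $L\nsubseteq K$ and $K\neq M$ are genuinely needed here), and the fact that a homogeneous element of $M/L$ admits a homogeneous representative in $M$ --- a step the paper's proof of Lemma \ref{Theorem 3.9}(2) glosses over when it lifts $y\in h(M^{\prime})$ to some $x\in M$ without noting that $x$ may and must be chosen in $h(M)$ before the definition of a graded special $r$-submodule can be applied. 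Your identification of $(M/L)_{g}=(M_{g}+L)/L$ as the reason the lift exists is exactly right, and the transfer of the conditions $ax\in K$, $Ann_{R}(x)=\{0\}$, and $a\in(K:_{R}M)$ across $L$ and $M/L$ is handled correctly in both parts.
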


\begin{proof}
\begin{enumerate}
\item Consider the injection $I:L\rightarrow M$, which is a graded $R$-monomorphism. Since $I^{-1}(K)=L\bigcap K$, $K\bigcap L$ is a graded special $r$-submodule of $L$ by Lemma \ref{Theorem 3.9} (1).
\item Consider the natural homomorphism $f:M\rightarrow M/L$, which is a graded $R$-epimorphism with $Ker(f)=L\subseteq K$. Since $f(K)=K/L$, $K/L$ is a graded special $r$-submodule of $M/L$ by Lemma \ref{Theorem 3.9} (2).
\end{enumerate}
\end{proof}

\begin{thm}\label{Theorem 3.10} Let $K$ be a proper graded $R$-submodule of a graded $R$-module $M$. Then $K$ is a graded special $r$-submodule of $M$ if and only if whenever $N$ is a graded $R$-submodule of $M$ such that $N\bigcap(h(M)-T(M))\neq\emptyset$ and $I$ is a graded ideal of $R$ such that $IN\subseteq K$, then $I\subseteq(K:_{R}M)$.
\end{thm}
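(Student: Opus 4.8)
The plan is to prove both implications, closely mirroring the structure of Lemma \ref{Theorem 2.1}, which establishes the analogous statement for graded $r$-submodules. First I would record the key translation that $h(M)-T(M)=\left\{x\in h(M):Ann_{R}(x)=\{0\}\right\}$, since by definition $T(M)$ consists precisely of the elements with nonzero annihilator. This turns the hypothesis $N\bigcap(h(M)-T(M))\neq\emptyset$ into the statement that $N$ contains a homogeneous element $x$ with $Ann_{R}(x)=\{0\}$, which is exactly the shape needed to feed into the definition of a graded special $r$-submodule.

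For the forward direction, assuming $K$ is a graded special $r$-submodule, I would fix such an $x\in N\bigcap(h(M)-T(M))$ and take an arbitrary $a\in I$. Writing $a=\displaystyle\sum_{g\in G}a_{g}$ with each $a_{g}\in I$ (using that $I$ is a graded ideal), each product $a_{g}x$ lies in $IN\subseteq K$. Since $a_{g}\in h(R)$, $x\in h(M)$, and $Ann_{R}(x)=\{0\}$, the defining property of graded special $r$-submodules forces $a_{g}\in(K:_{R}M)$ for every $g$. Summing over $g$ and using that $(K:_{R}M)$ is an ideal then gives $a\in(K:_{R}M)$, and hence $I\subseteq(K:_{R}M)$.

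For the converse, assuming the stated condition, I would take any $a\in h(R)$ and $x\in h(M)$ with $ax\in K$ and $Ann_{R}(x)=\{0\}$, and set $N=Rx$ and $I=\langle a\rangle$. By Lemma \ref{B}, $N$ is a graded $R$-submodule of $M$ and $I$ is a graded ideal of $R$. Moreover $x\in N\bigcap(h(M)-T(M))$, so $N\bigcap(h(M)-T(M))\neq\emptyset$, and $IN=R(ax)\subseteq K$ because $ax\in K$ and $K$ is an $R$-submodule. The hypothesis then yields $a\in I\subseteq(K:_{R}M)$, which is exactly the conclusion required for $K$ to be a graded special $r$-submodule.

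The individual steps are routine; the only point that genuinely needs care is the grading bookkeeping in the forward direction, namely verifying that each homogeneous component $a_{g}$ of an element $a\in I$ again lies in $I$ and that $a_{g}x\in K$, so that the defining property of graded special $r$-submodules can be applied componentwise before reassembling $a$.
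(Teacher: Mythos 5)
Your proposal is correct and follows essentially the same route as the paper's own proof: in the forward direction you decompose an arbitrary $a\in I$ into homogeneous components $a_{g}\in I$, pair each with a fixed $x\in N\bigcap(h(M)-T(M))$ so that $a_{g}x\in IN\subseteq K$, apply the definition componentwise, and reassemble; in the converse you take $N=Rx$ and $I=\langle a\rangle$ exactly as the paper does. No gaps.
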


\begin{proof} Suppose that $K$ is a graded special $r$-submodule of $M$. Let $N$ be a graded $R$-submodule of $M$ such that $N\bigcap(h(M)-T(M))\neq\emptyset$ and $I$ be a graded ideal of $R$ such that $IN\subseteq K$. Assume that $r\in I$. Then $r_{g}\in I$ for all $g\in G$ ,$I$ is graded. Since $N\bigcap(h(M)-T(M))\neq\emptyset$, there exists $x\in h(M)$ such that $Ann_{R}(x)=\{0\}$ and $x\in N$, and then $r_{g}x\in K$ for all $g\in G$. Using the fact that $K$ is a graded special $r$-submodule of $M$, we get $r_{g}\in (K:_{R}M)$ for all $g\in G$, and so $r=\displaystyle\sum_{g\in G}r_{g}\in (K:_{R}M)$. Hence, $I\subseteq(K:_{R}M)$. Conversely, let $r\in h(R)$ and $x\in h(M)$ such that $rx\in K$ with $Ann_{R}(x)=\{0\}$. Then $N=Rx$ is a graded $R$-submodule of $M$ such that $N\bigcap(h(M)-T(M))\neq\emptyset$ and $I=\langle r\rangle$ is a graded ideal of $R$ such that $IN\subseteq K$, and so $I\subseteq(K:_{R}M)$, which implies that $r\in (K:_{R}M)$. Therefore, $K$ is a graded special $r$-submodule of $M$.
\end{proof}

\begin{cor}\label{Theorem 3.11} Let $N$ be a graded $R$-submodule of $M$ such that $N\bigcap(h(M)-T(M))\neq\emptyset$.
\begin{enumerate}
\item If $K_{1}$ and $K_{2}$ are graded special $r$-submodules of $M$ such that $(K_{1}:_{R}M)N=(K_{2}:_{R}M)N$, then $(K_{1}:_{R}M)=(K_{2}:_{R}M)$.
\item If $IN$ is a graded special $r$-submodule of $M$ for some graded ideal $I$ of $R$, then $IN=IM$. In particular, $IM$ is a graded special $r$-submodule of $M$.
\end{enumerate}
\end{cor}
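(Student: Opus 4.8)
The plan is to derive both parts as immediate consequences of the characterization in Theorem \ref{Theorem 3.10}, which says that a graded special $r$-submodule $K$ absorbs any graded ideal $I$ into $(K:_{R}M)$ whenever $IN\subseteq K$ for some graded submodule $N$ meeting $h(M)-T(M)$. So the whole argument is a matter of feeding the right $N$ and $I$ into that theorem.

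For part (1), I would abbreviate $I_{j}=(K_{j}:_{R}M)$ for $j=1,2$; these are graded ideals of $R$ by Lemma \ref{B}. The key observation is that $I_{2}M\subseteq K_{2}$ by the very definition of the colon ideal, hence $I_{2}N\subseteq I_{2}M\subseteq K_{2}$. Using the hypothesis $I_{1}N=I_{2}N$, this gives $I_{1}N\subseteq K_{2}$. Since $K_{2}$ is a graded special $r$-submodule and $N\bigcap(h(M)-T(M))\neq\emptyset$, Theorem \ref{Theorem 3.10} applied to the graded ideal $I_{1}$ yields $I_{1}\subseteq(K_{2}:_{R}M)=I_{2}$. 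The setup is symmetric in the two indices, so interchanging the roles of $K_{1}$ and $K_{2}$ gives $I_{2}\subseteq I_{1}$, and therefore $(K_{1}:_{R}M)=(K_{2}:_{R}M)$.

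For part (2), I would apply Theorem \ref{Theorem 3.10} to the graded special $r$-submodule $K=IN$ itself. Since $IN\subseteq IN$ holds tautologically and $N\bigcap(h(M)-T(M))\neq\emptyset$ by hypothesis, the theorem forces $I\subseteq(IN:_{R}M)$, i.e. $aM\subseteq IN$ for every $a\in I$, whence $IM\subseteq IN$. The reverse containment $IN\subseteq IM$ is immediate from $N\subseteq M$, so $IN=IM$. The final assertion then requires no further work: since $IM=IN$ and $IN$ was assumed to be a graded special $r$-submodule, $IM$ is one as well.

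I do not anticipate a genuine obstacle here, as both parts are essentially bookkeeping around Theorem \ref{Theorem 3.10}. The only points needing a moment's care are noticing, in part (1), that $(K_{2}:_{R}M)N$ already lands inside $K_{2}$ so that the theorem applies, and, in part (2), recognizing that the tautological inclusion $IN\subseteq IN$ is exactly the hypothesis one needs in order to invoke the theorem with $K=IN$.
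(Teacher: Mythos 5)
Your proof is correct and is exactly the intended argument: the paper states this as a corollary of Theorem \ref{Theorem 3.10} with no written proof, and your two applications of that theorem (to $K_{2}$ with the ideal $(K_{1}:_{R}M)$ after noting $(K_{2}:_{R}M)N\subseteq K_{2}$, and to $K=IN$ with the tautological inclusion $IN\subseteq IN$) supply precisely the omitted details.
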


\begin{thm}\label{Theorem 3.10 (2)} Let $K$ be a proper graded $R$-submodule of a graded $R$-module $M$ such that $(h(M)\cap K) \subseteq T(M)$. If $K$ is not a graded special $r$-submodule of $M$ then there exist a graded ideal $I$ of $R$ and a graded $R$-submodule $N$ of $M$ such that $N\bigcap(h(M)-T(M))\neq\emptyset$, $K\subsetneqq N$, $(K:_{R}M)\subsetneqq I$ and $IN\subseteq K$.
\end{thm}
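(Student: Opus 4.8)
The plan is to mirror the proof of Theorem \ref{Theorem 2.1.1}, constructing $I$ and $N$ as colon objects built from the homogeneous elements that witness the failure of $K$ to be a graded special $r$-submodule. Since $K$ is not a graded special $r$-submodule of $M$, the negation of the definition supplies $a\in h(R)$ and $x\in h(M)$ with $ax\in K$ and $Ann_{R}(x)=\{0\}$, but $a\notin(K:_{R}M)$. Note first that $x\neq 0$ (otherwise $Ann_{R}(x)=R$), so $x$ is a nonzero homogeneous element, and $Ann_{R}(x)=\{0\}$ says precisely that $x\in h(M)-T(M)$. This element $x$ will be the point that detects all four required properties.

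First I would set $I=(K:_{R}x)=\left\{r\in R:rx\in K\right\}$. A short homogeneous-components argument shows $I$ is a graded ideal: if $r=\sum_{g}r_{g}$ lies in $I$, then $rx=\sum_{g}r_{g}x\in K$, and since right translation by $\deg x$ permutes $G$ the terms $r_{g}x$ lie in distinct graded components of $M$, so $K$ being graded forces each $r_{g}x\in K$, i.e.\ each $r_{g}\in I$. Because $a\in I$ while $(K:_{R}M)\subseteq(K:_{R}x)=I$ and $a\notin(K:_{R}M)$, we obtain $(K:_{R}M)\subsetneqq I$.

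Next I would set $N=(K:_{M}I)=\left\{m\in M:Im\subseteq K\right\}$. This is a graded $R$-submodule, which I would justify either directly or by writing $N=\bigcap_{b\in I\cap h(R)}(K:_{M}b)$ and invoking Lemma \ref{Proposition 2.4.3} (each $(K:_{M}b)$ is graded, and an arbitrary intersection of graded submodules is graded). By construction $IN\subseteq K$, and since $K$ is an $R$-submodule we have $Ik\subseteq Rk\subseteq K$ for every $k\in K$, so $K\subseteq N$. Moreover every $r\in I=(K:_{R}x)$ satisfies $rx\in K$, hence $Ix\subseteq K$, so $x\in N$; combined with $x\in h(M)-T(M)$ this gives $N\bigcap(h(M)-T(M))\neq\emptyset$.

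The one place where the standing hypothesis $(h(M)\cap K)\subseteq T(M)$ is genuinely used — and what I expect to be the only real subtlety — is the strict inclusion $K\subsetneqq N$. Unlike the $r$-submodule situation of Theorem \ref{Theorem 2.1.1}, the failure of the special $r$-condition hands us $a\notin(K:_{R}M)$ rather than $x\notin K$, so $x\notin K$ is not automatic. Here is where the hypothesis enters: $x\in h(M)$ and $x\notin T(M)$, so if $x$ were in $K$ then $x\in h(M)\cap K\subseteq T(M)$, a contradiction. Hence $x\in N\setminus K$, which yields $K\subsetneqq N$ and completes the verification of all four required properties.
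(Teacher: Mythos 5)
Your proof is correct, and it handles the one genuinely delicate point properly: the strict inclusion $K\subsetneqq N$ really does require the standing hypothesis $(h(M)\cap K)\subseteq T(M)$, since the failure of the special $r$-condition only gives $a\notin(K:_{R}M)$ rather than $x\notin K$. The paper's own proof starts from the same witness pair but builds the two colon objects in the opposite order: it sets $N=(K:_{M}r)$ first and then $I=(K:_{R}N)$, whereas you set $I=(K:_{R}x)$ first and then $N=(K:_{M}I)$, literally transplanting the argument of Theorem \ref{Theorem 2.1.1}. The two routes produce (in general) different pairs $(I,N)$, but both satisfy the four required conditions for the same underlying reasons: the ring element witnesses $(K:_{R}M)\subsetneqq I$, and the module element $x$ witnesses both $K\subsetneqq N$ and $N\bigcap(h(M)-T(M))\neq\emptyset$. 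If anything your write-up is more careful than the paper's: the paper asserts $x\in N-K$ without comment, while you isolate exactly where the hypothesis enters; your gradedness checks --- that $(K:_{R}x)$ is graded via the permutation $g\mapsto g\deg x$ of $G$, and that $(K:_{M}I)$ is graded as the intersection of the graded submodules $(K:_{M}b)$ over $b\in I\cap h(R)$ (using Lemma \ref{Proposition 2.4.3}) --- are also valid.
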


\begin{proof} Since $K$ is not a graded special $r$-submodule of $M$, there exist $r\in h(R)$ and $x\in h(M)$ such that $rx\in K$ with $Ann_{R}(x)=\{0\}$ but $r\notin (K:_{R}M)$. So, $N=(K:_{M}r)=\left\{m\in M:rm\in K\right\}$ is a graded $R$-submodule of $M$ such that $K\subsetneqq N$ since $x\in N-K$. Also, $I=(K:_{R}N)$ is a graded ideal of $R$ such that $(K:_{R}M)\subsetneqq I$ since $r\in I-(K:_{R}M)$, and then $IN=(K:_{R}N)N\subseteq K$.
\end{proof}

The following always holds.

\begin{lem}\label{Remark 3.1} Let $M$ be an $R$-module and $K$ be an $R$-submodule of $M$. Then $((K:_{M}r):_{R}M)=((K:_{R}M):_{R}r)$ for all $r\in R$.
\end{lem}

\begin{thm}\label{Proposition 3.13} Let $K$ be a graded special $r$-submodule of a graded $R$-module $M$ and $r\in h(R)-(K:_{R}M)$. Then $(K:_{M}r)$ is a graded special $r$-submodule of $M$.
\end{thm}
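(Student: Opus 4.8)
The plan is to verify the definition of graded special $r$-submodule directly for $(K:_{M}r)$, using the hypothesis on $K$ together with the colon identity recorded in Lemma \ref{Remark 3.1}. By Lemma \ref{Proposition 2.4.3}, the set $(K:_{M}r)$ is already a graded $R$-submodule of $M$, so the only structural point to settle at the outset is properness. Since $r\notin(K:_{R}M)$, we have $rM\nsubseteq K$, so there exists $m\in M$ with $rm\notin K$; this gives $m\notin(K:_{M}r)$, whence $(K:_{M}r)\neq M$. Thus $(K:_{M}r)$ is a proper graded $R$-submodule, and the bulk of the argument reduces to checking the defining implication.

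For the main step I would take $a\in h(R)$ and $x\in h(M)$ with $ax\in(K:_{M}r)$ and $Ann_{R}(x)=\{0\}$, and aim to conclude $a\in((K:_{M}r):_{R}M)$. Unwinding the colon, $ax\in(K:_{M}r)$ means $r(ax)\in K$, that is, $(ra)x\in K$. The key observation is that $ra\in h(R)$, since a product of homogeneous elements is homogeneous. Now $(ra)x\in K$ with $ra\in h(R)$, $x\in h(M)$, and $Ann_{R}(x)=\{0\}$, so the hypothesis that $K$ is a graded special $r$-submodule of $M$ yields $ra\in(K:_{R}M)$.

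Finally I would translate $ra\in(K:_{R}M)$ into the required membership. By commutativity, $ar\in(K:_{R}M)$, so $a\in((K:_{R}M):_{R}r)$, and Lemma \ref{Remark 3.1} identifies $((K:_{R}M):_{R}r)=((K:_{M}r):_{R}M)$. Hence $a\in((K:_{M}r):_{R}M)$, which is exactly the conclusion needed, and therefore $(K:_{M}r)$ is a graded special $r$-submodule of $M$.

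No genuine obstacle is expected here; the argument is essentially a reindexing. The only points that require care are confirming that $ra$ is homogeneous so that the defining property of $K$ applies, and invoking Lemma \ref{Remark 3.1} to match $((K:_{R}M):_{R}r)$ with $((K:_{M}r):_{R}M)$. Properness is worth stating explicitly, since a graded special $r$-submodule is by definition proper, and this is precisely where the hypothesis $r\notin(K:_{R}M)$ enters.
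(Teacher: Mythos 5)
Your proof is correct and follows essentially the same route as the paper: apply the special $r$-property of $K$ to the homogeneous product $ra$ acting on $x$, then convert $ra\in(K:_{R}M)$ into $a\in((K:_{M}r):_{R}M)$ via Lemma \ref{Remark 3.1}. Your explicit checks of gradedness (Lemma \ref{Proposition 2.4.3}) and properness are welcome additions that the paper leaves implicit.
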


\begin{proof} Let $s\in h(R)$ and $x\in h(M)$ such that $sx\in (K:_{M}r)$ with $Ann_{R}(x)=\{0\}$. Then $rsx\in K$, and so $rs\in (K:_{R}M)$ since $K$ is a graded special $r$-submodule of $M$. Hence $s\in ((K:_{R}M):_{R}r)=((K:_{M}r):_{R}M)$ by Lemma \ref{Remark 3.1}. Thus, $(K:_{M}r)$ is a graded special $r$-submodule of $M$.
\end{proof}

\begin{cor}\label{Proposition 3.13 (2)} Let $M$ be a graded $R$-module and $r\in h(R)-Ann_{R}(M)$. Then $Ann_{M}(r)$ is a graded special $r$-submodule of $M$.
\end{cor}

\begin{proof} Apply Theorem \ref{Proposition 3.13} and Example \ref{Proposition 3.11}.
\end{proof}

\begin{thm}\label{Theorem 3.13 (2)} Let $M$ be a graded $R$-module. If $\{0\}$ is the only graded special $r$-submodule of $M$ and $M$ is homogeneous faithful, then $M$ is homogeneous torsion free.
\end{thm}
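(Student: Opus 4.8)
The plan is to establish homogeneous torsion freeness directly from the definition, fixing homogeneous elements $r\in h(R)$ and $x\in h(M)$ with $rx=0$ and showing that if $r\neq 0$ then $x=0$. The crucial observation driving the whole argument is that the relevant hypotheses are both about the annihilator submodule $Ann_{M}(r)$: homogeneous faithfulness controls when the earlier machinery applies to $r$, while the assumption on graded special $r$-submodules pins down $Ann_{M}(r)$ itself.

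First I would translate homogeneous faithfulness into the precise membership needed. Since $r$ is a nonzero homogeneous element and $M$ is homogeneous faithful, we have $rM\neq\{0\}$, that is, $r\notin Ann_{R}(M)$; equivalently $r\in h(R)-Ann_{R}(M)$. This is exactly the hypothesis required to apply Corollary \ref{Proposition 3.13 (2)}, which then yields that $Ann_{M}(r)$ is a graded special $r$-submodule of $M$ (in particular a proper graded $R$-submodule).

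Next I would invoke the standing assumption that $\{0\}$ is the only graded special $r$-submodule of $M$, forcing $Ann_{M}(r)=\{0\}$. The relation $rx=0$ says precisely that $x\in Ann_{M}(r)$, so $x=0$. As $r\in h(R)$ and $x\in h(M)$ with $rx=0$ were arbitrary and $r\neq 0$ compelled $x=0$, the module $M$ is homogeneous torsion free, completing the proof.

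I do not anticipate any real obstacle: once Corollary \ref{Proposition 3.13 (2)} is available, the argument is a single deduction. The only point demanding care is the opening reduction, namely correctly reading homogeneous faithfulness as the statement $r\notin Ann_{R}(M)$ for nonzero homogeneous $r$, so that the corollary is genuinely applicable; everything after that is immediate.
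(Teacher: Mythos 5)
Your proof is correct and follows exactly the same route as the paper's: apply homogeneous faithfulness to get $r\notin Ann_{R}(M)$, invoke Corollary \ref{Proposition 3.13 (2)} to see that $Ann_{M}(r)$ is a graded special $r$-submodule, and then use the uniqueness hypothesis to force $Ann_{M}(r)=\{0\}$ and hence $x=0$. No differences worth noting.
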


\begin{proof} Let $r\in h(R)$ and $x\in h(M)$ such that $rx=0$. If $r=0$, then we are done. Suppose that $r\neq0$. Then $r\notin Ann_{R}(M)$ since $M$ is homogeneous faithful, and so $Ann_{M}(r)$ is a graded special $r$-submodule of $M$ by Corollary \ref{Proposition 3.13 (2)}. Since $Ann_{M}(r)=\{0\}$  we we have $x=0$. Hence, $M$ is homogeneous torsion free.
\end{proof}

\begin{thm}\label{Proposition 3.14} If $K$ is a graded maximal special $r$-submodule of a graded $R$-module $M$, then $K$ is a graded prime $R$-submodule of $M$.
\end{thm}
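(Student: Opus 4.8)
The plan is to verify the graded prime condition directly, by contraposition, using Theorem \ref{Proposition 3.13} to manufacture a larger special $r$-submodule and thereby contradict maximality. Concretely, I take $a\in h(R)$ and $x\in h(M)$ with $ax\in K$, and I assume $x\notin K$; the goal is then to show $a\in(K:_{R}M)$, which together with the case $x\in K$ is exactly the defining property of a graded prime submodule. Note that $K$ is proper, since being a graded special $r$-submodule already requires properness, so the conclusion will genuinely certify $K$ as graded prime.

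Suppose toward a contradiction that $a\notin(K:_{R}M)$, that is, $a\in h(R)-(K:_{R}M)$. Then Theorem \ref{Proposition 3.13} applies and tells me that the residual quotient $(K:_{M}a)=\left\{m\in M:am\in K\right\}$ is itself a graded special $r$-submodule of $M$, and in particular proper. Since $ax\in K$ we have $x\in(K:_{M}a)$, while $x\notin K$, so $K\subsetneqq(K:_{M}a)$. Thus $(K:_{M}a)$ is a proper graded special $r$-submodule strictly containing $K$, contradicting the maximality of $K$ among graded special $r$-submodules. Hence $a\in(K:_{R}M)$, and therefore $K$ is a graded prime $R$-submodule of $M$.

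I expect no serious obstacle: the whole argument is carried by Theorem \ref{Proposition 3.13}, which upgrades $(K:_{M}a)$ into a graded special $r$-submodule precisely under the hypothesis $a\notin(K:_{R}M)$. The single point worth stating carefully is that $(K:_{M}a)$ is proper, so that it is a legitimate competitor to $K$ in the maximality condition; this is automatic, because $(K:_{M}a)=M$ would force $aM\subseteq K$, i.e. $a\in(K:_{R}M)$, contrary to our standing assumption. It is also worth remarking that the strict inclusion $K\subsetneqq(K:_{M}a)$ uses nothing beyond the witness $x$, so the proof needs only the existence of a homogeneous element violating primeness.
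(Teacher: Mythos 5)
Your proof is correct and follows essentially the same route as the paper: both apply Theorem \ref{Proposition 3.13} to show that $(K:_{M}a)$ is a graded special $r$-submodule when $a\notin(K:_{R}M)$, and then invoke maximality of $K$. The paper concludes directly that $(K:_{M}a)=K$ and hence $x\in K$, whereas you phrase it as a contradiction via the strict inclusion $K\subsetneqq(K:_{M}a)$; this is only a cosmetic difference.
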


\begin{proof} Let $r\in h(R)$ and $x\in h(M)$ such that $rx\in K$. Suppose that $r\notin (K:_{R}M)$. Then $(K:_{M}r)$ is a graded special $r$-submodule of $M$ by Theorem \ref{Proposition 3.13}, and so $x\in (K:_{M}r)= K$ since $K$ is a graded maximal special $r$-submodule of $M$. Thus, $K$ is a graded prime $R$-submodule of $M$.
\end{proof}

\begin{thm}\label{Theorem 3.14} Let $M$ be a graded $R$-module. Then every proper graded $R$-submodule of $M$ is graded special $r$-submodule if and only if $h(M)\subseteq T(M)$ or $Rx=M$ for all $x\in h(M)-T(M)$.
\end{thm}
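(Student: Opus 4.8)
The plan is to unwind both the definition of graded special $r$-submodule and the definition of $T(M)$, and to notice that for a homogeneous element $x$ the hypothesis $Ann_{R}(x)=\{0\}$ is exactly the statement $x\in h(M)-T(M)$. Thus ``$K$ is a graded special $r$-submodule'' reads: for every $a\in h(R)$ and every $x\in h(M)-T(M)$, the relation $ax\in K$ forces $a\in(K:_{R}M)$. This reformulation makes both implications short, and the whole argument rests on the principal submodules $Rx$ for $x\in h(M)-T(M)$, which are graded $R$-submodules by Lemma \ref{B}(3).

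For the forward implication, I would assume that every proper graded $R$-submodule of $M$ is a graded special $r$-submodule and deduce the right-hand disjunction. If $h(M)\subseteq T(M)$ the first alternative holds and there is nothing to prove, so I would assume there exists some $x\in h(M)-T(M)$ and show $Rx=M$ for every such $x$. Arguing by contradiction, if $Rx\neq M$ then $Rx$ is a proper graded $R$-submodule, hence a graded special $r$-submodule by hypothesis; applying its defining property to $1\in h(R)$ and $x$, using $1\cdot x=x\in Rx$ together with $Ann_{R}(x)=\{0\}$, yields $1\in(Rx:_{R}M)$, that is $M\subseteq Rx$, contradicting properness. Hence $Rx=M$.

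For the converse, I would fix a proper graded $R$-submodule $K$ together with $a\in h(R)$ and $x\in h(M)$ satisfying $ax\in K$ and $Ann_{R}(x)=\{0\}$, and then split on the two alternatives. If $h(M)\subseteq T(M)$, then no homogeneous element can satisfy $Ann_{R}(x)=\{0\}$, so the hypothesis of the definition is never met and $K$ is automatically a graded special $r$-submodule. If instead $Rx=M$ for all $x\in h(M)-T(M)$, then since $Ann_{R}(x)=\{0\}$ places $x$ in $h(M)-T(M)$ we get $Rx=M$; the key identity is then $aM=a(Rx)=R(ax)\subseteq K$, where commutativity is used to rewrite $a(rx)=r(ax)$ and $K$ being an $R$-submodule containing $ax$ gives the inclusion. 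This produces $a\in(K:_{R}M)$, as required.

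The argument is essentially routine once this translation is in place; the only points that need care are the identity $aM=R(ax)$ (where commutativity of $R$ is genuinely used) and keeping the logical bookkeeping of the disjunction straight, in particular recognizing that the case $h(M)\subseteq T(M)$ is precisely the case in which the special $r$-submodule condition is vacuous. I do not anticipate a real obstacle beyond confirming that $Rx$ is a graded $R$-submodule, which is exactly what Lemma \ref{B}(3) provides.
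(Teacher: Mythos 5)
Your proof is correct and follows essentially the same route as the paper: contradiction via the properness of $Rx$ for $x\in h(M)-T(M)$ in the forward direction, and the identity $aM=a(Rx)=R(ax)\subseteq K$ (equivalently $(K:_{R}x)=(K:_{R}M)$) in the converse. The only difference is cosmetic: where the paper decomposes an arbitrary $r\in R$ into homogeneous components to show $(Rx:_{R}M)=R$, you apply the defining property directly to the homogeneous element $1\in R_{e}$, which is a slight streamlining of the same argument.
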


\begin{proof} Suppose that every proper graded $R$-submodule of $M$ is graded special $r$-submodule. Assume that $h(M)\nsubseteq M$. Let $x\in h(M)-T(M)$. If $Rx\neq M$, then $Rx$ is a graded special $r$-submodule of $M$ by assumption. Let $r\in R$. Then $r=\displaystyle\sum_{g\in G}r_{g}$ where $r_{g}\in R_{g}$ for all $g\in G$, and then $r_{g}x\in Rx$ for all $g\in G$, which implies that $r_{g}\in (Rx:_{R}M)$ for all $g\in G$, and hence $r\in (Rx:_{R}M)$. So, $(Rx:_{R}M)=R$. Thus, $Rx=RM=M$ which is a contradiction. Therefore $Rx=M$ for all $x\in h(M)-T(M)$. Conversely, if $h(M)\subseteq T(M)$, then every proper graded $R$-submodule of $M$ is graded special $r$-submodule. Suppose that $Rx=M$ for all $x\in h(M)-T(M)$. Let $K$ be a proper graded $R$-submodule of $M$. Assume that $r\in h(R)$ and $x\in h(M)$ such that $rx\in K$ with $Ann_{R}(x)=\{0\}$. Then $Rx=M$, and so $r\in (K:_{R}x)=(K:_{R}M)$. Hence, $K$ is a graded special $r$-submodule of $M$.
\end{proof}

\end{document}